\title[On Highly-regular graphs]
{On Highly-regular graphs}
\author[T.~Kousaka]{Taichi Kousaka}
\address{Graduate School of Mathematics, Kyushu University,
744 Motooka, Nishi-ku, Fukuoka, 819-0395 JAPAN}
\email{t-kosaka@math.kyushu-u.ac.jp}
\keywords{highly-regular graph, distance-regular graph, symmetric association scheme, intersection numbers.}
\subjclass[2010]{05E30, 05C50}
\newtheorem{theorem}{Theorem}[section]
\newtheorem{corollary}[theorem]{Corollary}
\newtheorem{proposition}[theorem]{Proposition}
\theoremstyle{definition}
\theoremstyle{remark}
\newtheorem{remark}[theorem]{Remark}
\newtheorem{question}{Question}
\DeclareMathOperator{\F}{\mathbb{F}}
\DeclareMathOperator{\Z}{\mathbb{Z}}
\DeclareMathOperator{\R}{\mathbb{R}}
\DeclareMathOperator{\I}{\mathrm{i}}
\DeclareMathOperator{\diam}{\mathrm{diam}}
\DeclareMathOperator{\Sp}{\mathrm{sp}}
\DeclareMathOperator{\m}{\mathrm{m}}
\begin{document}

\begin{abstract}
Highly-regular graphs can be regarded as a combinatorial generalization of distance-regular graphs. 
From this standpoint, we study combinatorial aspects of highly-regular graphs. 
As a result, we give the following three main results in this paper. 
Firstly, we give a characterization of a distance-regular graph by using the index and diameter of a highly-regular graph. 
Secondly, we give two constructions of highly-regular graphs. 
Finally, we generalize well-known properties of the intersection numbers of a distance-regular graph. 
\end{abstract}

\maketitle

\section{Introduction}
All graphs which we consider in this paper are finite undirected graphs without loops and multiple edges. For a graph $\Gamma$, we denote the vertex set of $\Gamma$ by $V(\Gamma)$, the edge set of $\Gamma$ by $E(\Gamma)$. 
For a connected graph $\Gamma$ and two vertices $u, v \in V(\Gamma)$, let $d(u, v)$ be the distance of $u$ and $v$, that is, the length of the shortest path from $u$ to $v$, and  $\diam(\Gamma)$ be the diameter of a graph $\Gamma$, that is, the maximum length of $d(u, v)$, $u, v \in V(\Gamma)$. 
If $\Gamma$ is not connected, the diameter of $\Gamma$ is defined as infinity. 
For a vertex $u$ and an integer $i \in \{0, \dots, \diam(\Gamma)\}$, let $D_{i}(u)$ or $D_{i, \Gamma}(u)$ be a set of all vertices which are at distance $i$ from $u$. 
Let $V(\Gamma)=\{v_{1}, \dots,v_{n}\}$ a labeling of $V(\Gamma)$ and $A$ be the adjacency matrix of a graph $\Gamma$ with spectrum $\Sp(\Gamma)=\{ \lambda_{0}^{\m(\lambda_{0})}, \dots, \lambda_{d}^{\m(\lambda_{d})}\}$, where $\lambda_{0}>\cdots >\lambda_{d}$, and the superscripts $m(\lambda_{i})$ stand for the multiplicities. Let $E_{i}$, $i=0, \dots, d$ be the minimal idempotents representing the orthogonal projections on the eigenspaces associated with $\lambda_{i}$. 
The above notation is used throughout this paper. 

Classically, many special regular graphs have been widely studied. An important class of regular graphs is the class of strongly-regular graphs. Here, a connected graph $\Gamma$ is {\it strongly-regular with parameters $(k, \alpha, \beta)$} if it is $k$-regular, for any $u, v \in V(\Gamma)$ with $\{u, v\} \in E(\Gamma)$, $|D_{1}(u)\cap D_{1}(v)|=\alpha$ and for any $u, v \in V(\Gamma)$ with $\{u, v\} \notin E(\Gamma)$, $|D_{1}(u)\cap D_{1}(v)|=\beta$. 
The class of strongly-regular graphs is much smaller than the class of regular graphs. 
There is a wider class which is contained in the class of regular graphs and contains the 
class of strongly-regular graphs. 
It is the class of {\it highly-regular graphs}. 
Here, a graph $\Gamma$ of order $n$ is {\it highly-regular with collapsed adjacency matrix $($CAM, for short$)$ $C=[c_{i,j}]_{1 \leq i, j \leq m}$ $(2 \leq m<n)$} if for every vertex $u \in V(\Gamma)$ there is a partition of $V(\Gamma)$ into $V_{1}(u)=\{u\}, V_{2}(u), \dots, V_{m}(u)$ such that each vertex $y \in V_{j}(u)$ is adjacent to $c_{i,j}$ vertices in $V_{i}(u)$. In this paper, we allow $m=n$ only if $n=2$. Namely, we regard the $1$-regular graph as a highly-regular graph. 
The class of highly-regular graphs was introduced as an interesting class of regular graphs by B.~Bollob{\'a}s ({\it cf.}~\cite{bollobas1}). 
Highly-regular graphs are a combinatorial generalization of strongly-regular graphs ({\it cf.}~\cite{bollobas}). 
From this standpoint, strongly-regular graphs are characterized by index of highly-regular graphs ({\it cf.}~\cite{alavi}). Here, the {\it index} of a highly-regular graph is the least positive integer of the size of CAMs. The index is an important invariant of highly-regular graphs. 

In addition to the above two classes, there are other classically well-known classes of regular graphs. 
One of these classes is the class of distance-transitive graphs. Here, a connected graph $\Gamma$ is {\it distance-transitive} if for every four vertices $u, v, x, y \in V(\Gamma)$ with $d(u, v)=d(x, y)$, there exists an automorphism $\sigma$ of $\Gamma$ such that $\sigma(x)=u$ and $\sigma(y)=v$ ({\it cf.}~\cite{brouwer}). 
One of the important and remarkable properties of distance-transitive graphs is that there are only finitely many distance-transitive graphs of fixed valency greater than $2$ ({\it cf.}~\cite{cameron}, \cite{cameronpraegersaxlseitz}). 
There are many important properties of distance-transitive graphs which are related to the other mathematical theory such as algebraic combinatorics. 
N.~Biggs introduced distance-regular graphs as a combinatorial generalization of distance-transitive graphs by observing that several combinatorial properties of distance-transitive graphs also hold in the class of distance-regular graphs. Here, a connected graph $\Gamma$ is {\it distance-regular} if the integers $|D_{1}(v) \cap D_{i-1}(u)|$, $|D_{1}(v) \cap D_{i}(u)|$, and $|D_{1}(v) \cap D_{i+1}(u)|$  depend only on $i=d(u, v) \in \{1, \dots, \diam(\Gamma) \}$ ({\it cf.}~\cite{brouwer}). 
In $1984$, E.~Bannai and T.~Ito conjectured that there are only finitely many distance-regular graphs of fixed valency greater than $2$ ({\it cf.}~\cite{bannai}). 
In $2015$, S.~Bang, A.~Dubickas, J.~H.~Koolen and V.~Moulton proved this conjecture conclusively ({\it cf.}~\cite{bang}). 

So far, we introduced four classes of regular graphs, that is, strongly-regular graphs, highly-regular graphs, distance-transitive graphs and distance-regular graphs. 
As we mentioned above, highly-regular graphs and distance-regular graphs were introduced in different contexts. 
However, there is a naturally connection. 
Distance-regular graphs are highly-regular graphs, that is, highly-regular graphs can be regarded as a combinatorial generalization of distance-regular graphs. From this standpoint, we study combinatorial aspects of highly-regular graphs. As a result, we give the following three main results in this paper.

Firstly, distance-regular graphs are characterized by index and diameter of highly-regular graphs. 
\begin{theorem}
A connected highly-regular graph $\Gamma$ has the index $\diam(\Gamma)+1$ if and only if it is a distance-regular graph.
\end{theorem}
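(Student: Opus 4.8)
The plan is to route both implications through a single uniform bound: for every connected highly-regular graph the index is at least $\diam(\Gamma)+1$. To see this, fix any CAM of size $m$ and, for a vertex $u$, let $W_u$ be the span of the indicator vectors $\mathbf{1}_{V_1(u)},\dots,\mathbf{1}_{V_m(u)}$. The defining property of a CAM is exactly that $A\,\mathbf{1}_{V_i(u)}=\sum_j c_{ij}\,\mathbf{1}_{V_j(u)}$, so $W_u$ is $A$-invariant and contains $e_u=\mathbf{1}_{V_1(u)}$; hence it contains $e_u,Ae_u,\dots,A^{\epsilon}e_u$, where $\epsilon$ is the eccentricity of $u$. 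These are linearly independent: for $w$ at distance $t$ from $u$ one has $(A^je_u)_w=(A^j)_{uw}=0$ when $j<t$ and $(A^te_u)_w>0$, so evaluating a hypothetical dependence on the farthest nonzero shell forces the top coefficient, and then all coefficients, to vanish. Taking $u$ with $\epsilon=\diam(\Gamma)$ yields $m\ge\dim W_u\ge\diam(\Gamma)+1$.

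The implication ``distance-regular $\Rightarrow$ index $=\diam(\Gamma)+1$'' is then immediate. For a distance-regular graph the definition says precisely that the distance partition $D_0(u),\dots,D_{\diam(\Gamma)}(u)$ is equitable with a tridiagonal quotient matrix whose entries (the intersection numbers) do not depend on $u$; this is a CAM of size $\diam(\Gamma)+1$, so the index is at most $\diam(\Gamma)+1$, and combined with the bound above we get equality.

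For the converse I assume a CAM $C=[c_{ij}]$ of size $m=\diam(\Gamma)+1$ and pass to the quotient graph $Q$ on the $m$ parts, with $i$ adjacent to $j$ precisely when $c_{ij}>0$; here $c_{ij}>0\iff c_{ji}>0$, since a single edge between $V_i(u)$ and $V_j(u)$ forces, through the equitable counts, every vertex on each side to have a neighbour on the other. The point is that $Q$ is the same graph for every $u$. I would first prove, by induction along $Q$ from the root part $V_1(u)=\{u\}$, that each part $V_i(u)$ is nonempty and that every $w\in V_i(u)$ satisfies $d(u,w)=d_Q(1,i)$: the inequality ``$\ge$'' comes from projecting a geodesic of $\Gamma$ to a walk in $Q$, and ``$\le$'' from the fact that an edge of $Q$ into the preceding part is realized at every vertex of $V_i(u)$. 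Consequently $\epsilon(u)=\max_i d_Q(1,i)$ equals the eccentricity of the root in $Q$, hence is the same for all $u$; since $\diam(\Gamma)=\max_u\epsilon(u)$, every vertex has eccentricity $\diam(\Gamma)$. I expect this eccentricity step to be the main obstacle: showing that the fixed quotient graph forces every vertex, not merely the peripheral ones, to have eccentricity $\diam(\Gamma)$.

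Once all eccentricities equal $\diam(\Gamma)$, the two partitions match at every vertex. Each $V_i(u)$ lies in the single shell $D_{d_Q(1,i)}(u)$, so the CAM partition refines the distance partition; but both now have exactly $\diam(\Gamma)+1$ nonempty cells, and a refinement with the same number of cells is an equality. Ordering the parts by distance gives $V_{i}(u)=D_{i-1}(u)$ for all $u$, whence $C$ is tridiagonal and its entries realize the intersection numbers $c_i,a_i,b_i$ as functions of the distance alone, independent of $u$; this is exactly the definition of a distance-regular graph.
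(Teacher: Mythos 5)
Your proof is correct, and it is noticeably more self-contained than the paper's. The paper proves this statement (its Corollary 3.2) by routing both directions through Theorem 3.1 (the tridiagonal-CAM characterization of distance-regular graphs) and by importing the key structural facts from Alavi et al.\ [1, Proposition 3] (restated as Proposition 2.3): namely that each distance class $D_i(u)$ is a disjoint union of CAM cells, which immediately gives the lower bound $\I(\Gamma)\geq 1+\diam(\Gamma)$ by counting, and which, when the index equals $\diam(\Gamma)+1$, forces the cells to biject with the distance classes so that the CAM is tridiagonal and the column-peeling induction of Theorem 3.1 applies. You re-derive both ingredients from scratch: the lower bound comes from the Krylov-type observation that the span of the cell indicators is $A$-invariant, contains $e_u$, and hence has dimension at least $\epsilon(u)+1$; and the refinement property $V_i(u)\subseteq D_{d_Q(1,i)}(u)$ comes from your quotient-graph induction, which also makes explicit the eccentricity-equalization step ($\epsilon(u)=\max_i d_Q(1,i)$ is independent of $u$, so every vertex has eccentricity $\diam(\Gamma)$) that is only implicit in the statement of [1, Proposition 3]. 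What your route buys is independence from the cited reference and a transparent reason why the peripheral-vertex issue you flagged cannot arise; what the paper's route buys is brevity and the slightly stronger intermediate statement that a tridiagonal CAM by itself, with no hypothesis on its size versus the diameter, already forces distance-regularity.
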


Secondly, we give a construction of connected highly-regular graphs with diameter $2$ which are not distance-regular graphs. Moreover, we also give a construction of highly-regular graphs which do not always have diameter $2$ by using a symmetric association scheme. 
\begin{theorem}
For a highly-regular graph $\Gamma$ with $3 \leq \diam(\Gamma) < \infty$, the complement of the graph is a highly-regular graph with $\diam(\overline{\Gamma})=2$ which is not a distance-regular graph. 
Moreover, Let $\mathfrak{X}=(X, \{R_{i}\}_{i=0}^{d})$ be a symmetric association scheme of class $d$. Then, for each $l \in \{1, \dots, d\}$, the graph $\Gamma_{R_{l}}=(X, E_{R_{l}})$ is a highly-regular graph, where $E_{R_{l}}=\{\{x,y\} \mid (x,y) \in R_{l} \}$. 
\end{theorem}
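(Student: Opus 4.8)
The plan is to handle the two assertions in turn. For the complement, I would first recall that a highly-regular graph is regular, say $k$-regular on $n$ vertices, with $k=\sum_{i}c_{i,j}$ independent of $j$. The key preliminary is that the cell sizes $n_{i}:=|V_{i}(u)|$ are independent of the base vertex $u$: counting the edges of $\Gamma$ between $V_{i}(u)$ and $V_{j}(u)$ in two ways yields $n_{j}c_{i,j}=n_{i}c_{j,i}$ for all $i,j$, and since $\Gamma$ is connected the auxiliary graph on $\{1,\dots,m\}$ with $i\sim j\iff c_{i,j}\neq 0$ is connected; together with $n_{1}=1$ this determines each $n_{i}$ from $C$ alone. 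Granting this, I would keep the very same partition $V_{1}(u)=\{u\},V_{2}(u),\dots,V_{m}(u)$ for $\overline{\Gamma}$ and complement the adjacencies cell by cell: a vertex $y\in V_{j}(u)$ has $n_{i}-\delta_{i,j}-c_{i,j}$ neighbours in $V_{i}(u)$ in $\overline{\Gamma}$. Hence $\overline{\Gamma}$ is highly-regular with collapsed adjacency matrix $\overline{c}_{i,j}=n_{i}-\delta_{i,j}-c_{i,j}$.

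Next I would prove $\diam(\overline{\Gamma})=2$. Since $\diam(\Gamma)\geq 3$ there are vertices $x,y$ with $d(x,y)=3$, so their closed neighbourhoods are disjoint and $n\geq 2(k+1)=2k+2$. Then for every edge $\{p,q\}$ of $\Gamma$ the two closed neighbourhoods contain both $p$ and $q$, so their union has at most $2k<n$ vertices; any vertex outside it is non-adjacent in $\Gamma$ to both $p$ and $q$, i.e. a common neighbour of $p,q$ in $\overline{\Gamma}$. Thus any two $\overline{\Gamma}$-nonadjacent vertices have a common $\overline{\Gamma}$-neighbour, and as $\overline{\Gamma}$ is not complete this gives $\diam(\overline{\Gamma})=2$. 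To see $\overline{\Gamma}$ is not distance-regular I would argue by contradiction: a distance-regular graph of diameter $2$ is a connected strongly-regular graph, whose complement is again strongly-regular and therefore of diameter $2$; this would force $\diam(\Gamma)\leq 2$, contradicting $\diam(\Gamma)\geq 3$.

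For the second assertion, fix $l\in\{1,\dots,d\}$ and $u\in X$, and partition $X$ into the relation classes $V_{i}(u)=\{x\in X\mid (u,x)\in R_{i-1}\}$, $i=1,\dots,d+1$; here $V_{1}(u)=\{u\}$ because $R_{0}$ is the diagonal. For $y\in V_{j}(u)$, that is $(u,y)\in R_{j-1}$, the number of $\Gamma_{R_{l}}$-neighbours of $y$ in $V_{i}(u)$ equals $|\{z\mid (u,z)\in R_{i-1},\ (z,y)\in R_{l}\}|$, which by the defining axiom of a symmetric association scheme is the intersection number $p_{i-1,l}^{j-1}$ and in particular is independent of $u$ and $y$. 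Therefore $\Gamma_{R_{l}}$ is highly-regular with collapsed adjacency matrix $C=[p_{i-1,l}^{j-1}]_{1\leq i,j\leq d+1}$. The number of cells is $d+1\leq|X|$; when $d+1<|X|$ this is already of admissible size, and in the boundary case $d+1=|X|$ (where $\Gamma_{R_{l}}$ degenerates to a perfect matching) a coarser partition supplies an admissible collapsed adjacency matrix.

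The routine portions are the two matrix computations and the second assertion, the latter being essentially a transcription of the association-scheme axiom. I expect the main obstacle to be the preliminary step of the first assertion, namely showing the cell sizes $n_{i}$ are independent of the base vertex so that $\overline{c}_{i,j}=n_{i}-\delta_{i,j}-c_{i,j}$ is well defined; this is exactly where connectivity of $\Gamma$ enters. The diameter computation then rests on the clean observation that $d(x,y)=3$ forces disjoint closed neighbourhoods, hence $n\geq 2k+2$.
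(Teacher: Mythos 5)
Your proposal is correct, but for the first assertion it takes a genuinely different route from the paper. The paper never touches the collapsed adjacency matrix of $\overline{\Gamma}$: it quotes from the literature that the complement of a highly-regular graph is highly-regular (its Proposition 2.1) and that the index is invariant under complementation, notes that a connected regular graph of diameter greater than $2$ has complement of diameter $2$, and then runs the chain $\I(\overline{\Gamma})=\I(\Gamma)\geq 1+\diam(\Gamma)\geq 4>3=1+\diam(\overline{\Gamma})$, so that its Corollary 3.2 (a connected highly-regular graph is distance-regular iff its index equals $\diam+1$) immediately rules out distance-regularity of $\overline{\Gamma}$. You instead build everything by hand: you re-derive the vertex-independence of the cell sizes $n_{i}$ (the paper's Proposition 2.3(2)) in order to exhibit the complement's CAM $\overline{c}_{i,j}=n_{i}-\delta_{i,j}-c_{i,j}$, you obtain $\diam(\overline{\Gamma})=2$ from the counting argument $n\geq 2k+2$, and you exclude distance-regularity via strongly-regular graphs. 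Both work; the paper's argument is shorter because it leans on its index machinery, while yours is self-contained and gives more information (the explicit CAM of $\overline{\Gamma}$). One point to tighten: ``the complement of a strongly-regular graph is strongly-regular and therefore of diameter $2$'' is not literally true, since that complement can be complete or disconnected; the clean way to finish is to note that $\Gamma$ is connected with a pair of vertices at distance exactly $2$, so the constant $\mu$ of $\Gamma$ would satisfy $\mu\geq 1$, forcing every non-adjacent pair to be at distance $2$ and hence $\diam(\Gamma)\leq 2$, a contradiction. For the second assertion your argument coincides with the paper's (the cells are the relation classes $uR_{i}$ and the counts are the intersection numbers $p_{i,l}^{j}$); your extra care about the size constraint $m<n$ in the degenerate case $d+1=|X|$ addresses a detail the paper passes over.
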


Poulos showed that finite upper half plane graphs are highly-regular graphs ({\it cf.}~\cite{angel}, \cite{terras}). 
The construction of highly-regular graphs by using a symmetric association scheme in Theorem $1.2$ is a generalization of Poulos's result. 

Finally, we give a generalization of well-known properties of the intersection numbers of distance-regular graphs. 
\begin{theorem}
Let $\Gamma$ be a connected highly-regular graph with CAM $C=[c_{i,j}]_{1 \leq i, j \leq m}$ and valency $k$. Here, let a labeling of $C$ be a labeling with respect to distance. 
For each $i \in \{0, 1, \dots, \diam(\Gamma)\}$, there exists a nonempty subset $S_{i}$ of $I=\{1, \dots, m\}$ such that $D_{i}(u)=\bigsqcup_{t \in S_{i}}V_{t}(u)$, $u \in V$. 
For each $i \in \{1, \dots, \diam(\Gamma) \}$, let the integers $b_{i-1}^{\max}$,  $c_{i}^{\max}$, and $c_{i}^{\min}$ be the following: 
\begin{itemize}
\item $b_{i-1}^{\max}=\max \{ \sum_{t \in S_{i}}c_{t, l} \mid l \in S_{i-1} \}$.
\item $c_{i}^{\max}=\max \{ \sum_{t \in S_{i-1}}c_{t, l} \mid l \in S_{i} \}$. 
\item $c_{i}^{\min}=\min \{ \sum_{t \in S_{i-1}}c_{t, l} \mid l \in S_{i} \}$. 
\end{itemize} 
Then, the following inequalities hold: 
	\begin{enumerate}[label={\rm (\arabic*)}]
	\item $k=b_{0}^{\max} \geq b_{1}^{\max} \geq \cdots \geq b_{\diam(\Gamma)-1}^{\max} \geq 1$. 
	\item $1=c_{1}^{\min} \leq c_{2}^{\min} \leq \cdots \leq c_{\diam(\Gamma)}^{\min} \leq k$. 
	\end{enumerate}
Moreover, we suppose that $\Gamma$ satisfies the following property: 
$(\star)$ For any $u \in V(\Gamma)$, $i \in \{0, 1, \dots, \diam(\Gamma)-1 \}$, $x \in D_{i}(u)$, the set $D_{1}(x) \cap D_{i+1}(u)$ is nonempty set. 
Then, the following inequality holds: 
	\begin{enumerate}[label={\rm (\arabic*)}]
	\item[$(3)$] If $i \geq 1$ and $i+j \leq \diam(\Gamma)$, then $c_{i}^{\max} \leq b_{j}^{\max}$. 
	\end{enumerate}
\end{theorem}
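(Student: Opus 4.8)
The plan is to first make the three combinatorial quantities explicit through the cell structure, and then prove all of parts (1)--(3) by comparing neighbourhoods with respect to two different base vertices joined by a short geodesic.

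First I would establish the shell decomposition $D_i(u)=\bigsqcup_{t\in S_i}V_t(u)$ by induction on $i$: since $D_0(u)=V_1(u)$, and since every vertex of a cell $V_t(u)$ has the same number $\sum_{l\in S_i}c_{l,t}$ of neighbours inside $D_i(u)$, a whole cell either lies in $D_{i+1}(u)$ or is disjoint from it, and the resulting index set $S_{i+1}$ depends only on $C$, not on $u$. Granting this, for $x\in V_l(u)$ with $l\in S_i$ the number of neighbours of $x$ in $D_{i+1}(u)$ is $\sum_{t\in S_{i+1}}c_{t,l}$ and the number in $D_{i-1}(u)$ is $\sum_{t\in S_{i-1}}c_{t,l}$; these depend only on the cell, and because $C$ and the sets $S_i$ are global invariants, the numbers $b_{i-1}^{\max}$, $c_i^{\max}$, $c_i^{\min}$ are base-independent. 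Thus, for any vertex $y$ at distance $i$ from \emph{any} base, its number of neighbours one step farther is at most $b_i^{\max}$ and its number one step closer lies between $c_i^{\min}$ and $c_i^{\max}$; this translation is what lets me argue with concrete vertices rather than with the matrix.

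For (1) the endpoints are immediate: $b_0^{\max}=k$ since $D_0(u)=\{u\}$, and $b_{\diam(\Gamma)-1}^{\max}\ge1$ by connectedness. For the monotonicity $b_i^{\max}\le b_{i-1}^{\max}$ I would pick a vertex $y$ realising $b_i^{\max}$ with base $u$, take a geodesic $u=x_0,x_1,\dots,x_i=y$, and shift the base to $u'=x_1$, so that $d(u',y)=i-1$. The key point is that every neighbour $y'$ of $y$ with $d(u,y')=i+1$ satisfies $d(u',y')=i$ exactly: the lower bound is $d(u',y')\ge d(u,y')-d(u,u')$ and the upper bound is $d(u',y')\le d(u',y)+1$. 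Hence $D_1(y)\cap D_{i+1}(u)\subseteq D_1(y)\cap D_i(u')$, so $b_i^{\max}$ is at most the number of neighbours of $y$ one step farther from $u'$, which is at most $b_{i-1}^{\max}$ since $y$ sits at distance $i-1$ from $u'$. Part (2) is the mirror image: choosing $w$ realising $c_{i+1}^{\min}$, shifting the base one step toward $w$ to make $d(u',w)=i$, and checking by the same triangle-inequality pinning that $D_1(w)\cap D_{i-1}(u')\subseteq D_1(w)\cap D_i(u)$ yields $c_i^{\min}\le c_{i+1}^{\min}$; the endpoints $c_1^{\min}=1$ and $c_{\diam(\Gamma)}^{\min}\le k$ are clear. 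Neither argument needs $(\star)$.

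For (3) the same inclusion philosophy works, but now the base must be shifted outward by $j$ steps, and this is exactly where $(\star)$ enters. I would take $w$ realising $c_i^{\max}$ with base $u$, so $d(u,w)=i$, and use $(\star)$ repeatedly to extend a geodesic beyond $w$ by $j$ further steps, producing $x$ with $d(u,x)=i+j$ and $d(w,x)=j$; this is possible precisely because $i+j\le\diam(\Gamma)$ keeps every intermediate vertex below the diameter, so a strictly farther neighbour always exists. Then for any neighbour $z$ of $w$ with $d(u,z)=i-1$, the bounds $d(x,z)\ge d(u,x)-d(u,z)=j+1$ and $d(x,z)\le d(x,w)+1=j+1$ force $d(x,z)=j+1$, giving $D_1(w)\cap D_{i-1}(u)\subseteq D_1(w)\cap D_{j+1}(x)$; read through the base $x$, at which $w$ lies at distance $j$, this yields $c_i^{\max}\le b_j^{\max}$. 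The main obstacle is this outward extension: one must verify that $(\star)$ genuinely produces a geodesic of the required length and that the extended path stays shortest, each step increasing the distance to $u$ by exactly one, since otherwise the distance-pinning that drives the inclusion collapses.
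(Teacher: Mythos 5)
Your proposal is correct and follows essentially the same route as the paper: all three parts are proved by shifting the base vertex along (or, for (3), extending beyond via $(\star)$) a geodesic and using triangle-inequality pinning to obtain the inclusions $D_{1}(x)\cap D_{i+1}(y)\subseteq D_{1}(x)\cap D_{i}(z)$, $D_{1}(x)\cap D_{i-1}(y)\subseteq D_{1}(x)\cap D_{i}(z)$, and $D_{1}(x)\cap D_{i-1}(y)\subseteq D_{1}(x)\cap D_{j+1}(z)$, which the paper uses in Propositions \ref{intersection numbers1} and \ref{intersection numbers2}. The only cosmetic differences are that you work with a cell realising the extremum while the paper bounds every cell before taking the max/min, and that you sketch the shell decomposition of Proposition \ref{1, prop3} rather than citing it.
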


It is well-known that distance-regular graphs have interesting combinatorial properties. 
Moreover, distance-regular graphs have many applications such as coding theory. 
On the other hand, the class of highly-regular graphs is very wide. 
For example, vertex-transitive graphs with non-identity stabilizers are highly-regular graphs. 
For such reasons, it seems that it is difficult to investigate interesting properties of highly-regular graphs. 
As we described above, however, highly-regular graphs have similar properties of distance-regular graphs. 
Therefore, we believe that  it is an interesting approach to study highly-regular graphs that highly-regular graphs are considered as a generalization of distance-regular graphs. 

\section{Preliminaries}
Let $\Gamma$ be a graph of order $n$. We denote the complement of $\Gamma$ by $\overline{\Gamma}$. 
If $\Gamma$ is a distance-regular graph, the complement of $\Gamma$ is not always a distance-regular graph. However, if $\Gamma$ is a strongly-regular graph and $\overline{\Gamma }$ is connected, $\overline{\Gamma}$ is also a strongly-regular graph.  
This property is generalized to a highly-regular graph as follows. 
 
\begin{proposition}$($cf.~{\rm [1, PROPOSITION 1]}$).$\label{1, prop1}
A graph $\Gamma$ is a highly-regular graph if and only if the complement of $\Gamma$ is a highly-regular graph. 
\end{proposition}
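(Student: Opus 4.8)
The plan is to realize $\overline{\Gamma}$ as a highly-regular graph by reusing the very same vertex partitions that witness the high-regularity of $\Gamma$. Fix a vertex $u$ and the partition $V(\Gamma)=V_{1}(u)\sqcup\cdots\sqcup V_{m}(u)$ with $V_{1}(u)=\{u\}$ coming from the CAM $C=[c_{i,j}]$. First I would count, for a vertex $y\in V_{j}(u)$, how many neighbours it has in $V_{i}(u)$ inside $\overline{\Gamma}$. Since adjacency in $\overline{\Gamma}$ is the negation of adjacency in $\Gamma$ (excluding $y$ itself), this number is $|V_{i}(u)|-c_{i,j}-\delta_{i,j}$, where the $\delta_{i,j}$ subtracts the vertex $y$ when $i=j$. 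Thus the only thing standing between us and a CAM for $\overline{\Gamma}$ is the block sizes $n_{i}:=|V_{i}(u)|$: if these are independent of $u$, then setting $\overline{c}_{i,j}:=n_{i}-c_{i,j}-\delta_{i,j}$ gives a matrix that serves as a CAM for $\overline{\Gamma}$ with the same partitions.

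The main obstacle is exactly the claim that $n_{i}$ does not depend on the chosen root $u$. To prove this I would first note that for each fixed $u$ the partition is equitable, so counting the edges of $\Gamma$ between $V_{i}(u)$ and $V_{j}(u)$ in two ways yields $n_{i}\,c_{j,i}=n_{j}\,c_{i,j}$. Feeding this relation into $C$ shows that the vector $(n_{1},\dots,n_{m})^{\mathsf{T}}$ is a right eigenvector of $C$ for the eigenvalue $k=\sum_{i}c_{i,1}$ (the common valency, which equals every column sum of $C$). Because $C$ is a nonnegative matrix whose associated block digraph is irreducible when $\Gamma$ is connected (the relation above makes the support of $C$ symmetric, so strong connectivity reduces to connectivity of $\Gamma$), Perron--Frobenius forces $k$ to be a simple eigenvalue; hence this eigenvector is unique up to scaling and is pinned down completely by $n_{1}=|V_{1}(u)|=1$. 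Consequently each $n_{i}$ is determined by $C$ alone, independently of $u$, as required. For a disconnected $\Gamma$ one applies the same argument on each connected component, or equivalently works with each irreducible block of $C$.

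With constant block sizes in hand, I would then verify that $\overline{C}=[\,\overline{c}_{i,j}\,]$ with $\overline{c}_{i,j}=n_{i}-c_{i,j}-\delta_{i,j}$ is a genuine CAM: its entries are nonnegative integers, the partitions still begin with the singleton $V_{1}(u)=\{u\}$, and the size parameter $m$ is unchanged, so the admissibility condition on $m$ is inherited from $\Gamma$. This establishes that $\Gamma$ highly-regular implies $\overline{\Gamma}$ highly-regular. Finally, the ``if'' direction comes for free from the involutivity of complementation: since $\overline{\overline{\Gamma}}=\Gamma$, applying the implication just proved to $\overline{\Gamma}$ shows that if $\overline{\Gamma}$ is highly-regular then so is $\Gamma$. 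I expect the eigenvector and Perron--Frobenius step certifying the $u$-independence of the block sizes to be the only genuinely nontrivial point; the adjacency count and the involutivity argument are routine.
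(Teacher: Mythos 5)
First, note that the paper does not actually prove this proposition; it is quoted verbatim from [1, Proposition 1], so there is no in-paper argument to compare against. Your strategy is the natural one: keep the partitions $V_{1}(u),\dots,V_{m}(u)$ and pass to $\overline{c}_{i,j}=n_{i}-c_{i,j}-\delta_{i,j}$, reducing everything to the claim that the class sizes $n_{i}=|V_{i}(u)|$ do not depend on $u$. That claim is precisely Proposition~\ref{1, prop3}(2) of the paper (also imported from [1]), so you could have cited it; your own derivation of it --- double counting to get $n_{j}c_{i,j}=n_{i}c_{j,i}$, hence $C(n_{1},\dots,n_{m})^{\mathsf{T}}=k(n_{1},\dots,n_{m})^{\mathsf{T}}$, then uniqueness of the positive eigenvector --- is correct for connected $\Gamma$. (Perron--Frobenius is heavier machinery than needed: the relation $n_{j}=n_{i}c_{i,j}/c_{j,i}$ already propagates the sizes along any path in the support graph of $C$ starting from $n_{1}=1$, which is all the ``uniqueness up to scaling'' you use.)

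The genuine gap is the disconnected case, which you cannot dismiss in one clause. The proposition is an ``if and only if'' for arbitrary graphs, and disconnected highly-regular graphs do occur (e.g.\ $\overline{C_{4}}=2K_{2}$, or $mK_{r}$), so the forward implication must be proved when $\Gamma$ is disconnected; it cannot be recovered from the connected case by complementation, since it is exactly the disconnected side of the pair $\{\Gamma,\overline{\Gamma}\}$ to which your eigenvector argument fails to apply. Your proposed repair --- ``work with each irreducible block of $C$'' --- does not close this: within each block of the support graph of $C$ the ratios of the $n_{i}$ are determined, but the normalization $n_{1}=1$ fixes the scale only of the block containing the singleton class, and the single remaining constraint $\sum_{i}n_{i}=n$ fixes at most one further block. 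With three or more blocks nothing in your argument pins down the remaining scales, so the $u$-independence of the $n_{i}$ --- and with it the existence of a single matrix $\overline{C}$ valid for every root vertex --- is not established. You would need an additional argument here (for instance, analyzing how the classes in the block of $C$ containing index $1$ exhaust the connected component of $u$, and controlling the partition of the remaining components), or an explicit restriction of the statement to the cases actually used.
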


For two graphs $\Gamma_{1}$ and $\Gamma_{2}$, the Cartesian product $\Gamma_{1} \Box \Gamma_{2}$ of $\Gamma_{1}$ and $\Gamma_{2}$ is a graph such that 
the vertex set of $\Gamma_{1} \Box \Gamma_{2}$ is $V(\Gamma_{1}) \times V(\Gamma_{2})$, and 
two vertices $(u_{1}, v_{1})$, $(u_{2}, v_{2})$ are adjacent if and only if 
$u_{1}=u_{2}$ and $\{v_{1}, v_{2}\} \in E(\Gamma_{2})$, or
$v_{1}=v_{2}$ and $\{u_{1}, u_{2}\} \in E(\Gamma_{1})$. 
In general, the Cartesian product of two distance-regular graphs is not always a distance-regular graph. However, the Cartesian product of two highly-regular graphs is a highly-regular graph. 

\begin{proposition}$($cf.~{\rm [1, PROPOSITION 6]}$).$\label{1, prop6}
If $\Gamma_{1}$ and $\Gamma_{2}$ are highly-regular graphs, the Cartesian product $\Gamma_{1} \Box \Gamma_{2}$ is a highly-regular graph. 
\end{proposition}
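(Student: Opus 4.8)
The plan is to manufacture a collapsed adjacency matrix for $\Gamma_{1}\Box\Gamma_{2}$ directly from those of the two factors, using a product of the distinguished partitions. Fix CAMs $C^{(1)}=[c^{(1)}_{i,a}]_{1\le i,a\le m_{1}}$ and $C^{(2)}=[c^{(2)}_{j,b}]_{1\le j,b\le m_{2}}$ witnessing that $\Gamma_{1}$ and $\Gamma_{2}$ are highly-regular, so that for every $u_{1}\in V(\Gamma_{1})$ there is a partition $V(\Gamma_{1})=V_{1}(u_{1})\sqcup\cdots\sqcup V_{m_{1}}(u_{1})$ with $V_{1}(u_{1})=\{u_{1}\}$, and likewise for $\Gamma_{2}$. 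For a vertex $(u_{1},u_{2})$ of the product I would index the new blocks by pairs and set $W_{(a,b)}(u_{1},u_{2})=V_{a}(u_{1})\times V_{b}(u_{2})$ for $(a,b)\in\{1,\dots,m_{1}\}\times\{1,\dots,m_{2}\}$. These $m_{1}m_{2}$ sets are nonempty and partition $V(\Gamma_{1})\times V(\Gamma_{2})$, and the distinguished block $W_{(1,1)}(u_{1},u_{2})=\{u_{1}\}\times\{u_{2}\}=\{(u_{1},u_{2})\}$ is the required singleton.

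The core step is the adjacency count, which I would carry out by exploiting the structure of the Cartesian product. Take any $(y_{1},y_{2})\in W_{(a,b)}(u_{1},u_{2})$, so $y_{1}\in V_{a}(u_{1})$ and $y_{2}\in V_{b}(u_{2})$. By definition of $\Box$, the neighbors of $(y_{1},y_{2})$ are exactly the vertices $(y_{1},z)$ with $z$ adjacent to $y_{2}$ in $\Gamma_{2}$, together with the vertices $(w,y_{2})$ with $w$ adjacent to $y_{1}$ in $\Gamma_{1}$; these two families are disjoint, since a common element would force $(w,z)=(y_{1},y_{2})$. A neighbor of the first type lies in $W_{(i,j)}=V_{i}(u_{1})\times V_{j}(u_{2})$ precisely when $i=a$ and $z\in V_{j}(u_{2})$, and the number of such $z$ is $c^{(2)}_{j,b}$; a neighbor of the second type lies in $W_{(i,j)}$ precisely when $j=b$ and $w\in V_{i}(u_{1})$, contributing $c^{(1)}_{i,a}$ choices. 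Hence the number of neighbors of $(y_{1},y_{2})$ inside $W_{(i,j)}$ equals
\[
\delta_{i,a}\,c^{(2)}_{j,b}+\delta_{j,b}\,c^{(1)}_{i,a},
\]
which depends only on the index pairs $(i,j)$ and $(a,b)$, not on the base vertex $(u_{1},u_{2})$ nor on the representative $(y_{1},y_{2})$. This is exactly the highly-regular condition, with CAM the Kronecker sum $C^{(1)}\otimes I_{m_{2}}+I_{m_{1}}\otimes C^{(2)}$ under the pairing of indices.

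The verification above is essentially bookkeeping, so the only genuinely delicate point—and the one I would treat as the main obstacle—is the size convention $2\le m<n$ built into the definition of a CAM. Since $2\le m_{\ell}\le n_{\ell}$ with equality only when $n_{\ell}=2$, the product partition has $m_{1}m_{2}<n_{1}n_{2}$ blocks except when both factors are $K_{2}$, in which case $\Gamma_{1}\Box\Gamma_{2}=C_{4}$ and the product partition degenerates to four singletons with $m=n=4$. I would dispose of this single exceptional case separately, noting that $C_{4}$ is a cycle and hence distance-regular, so it is highly-regular by its smaller distance partition; alternatively one coarsens the product partition by merging blocks whose columns in the matrix above coincide, which preserves the witnessing property while reducing the block count below $n$. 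With that caveat handled, the displayed formula completes the argument.
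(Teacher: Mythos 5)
Your proof is correct, and the comparison here is necessarily one-sided: the paper states this proposition without proof, simply citing Proposition~6 of Alavi--Chartrand--Lick--Swart [1], so there is no internal argument to measure yours against. Your construction --- the product partition $V_{a}(u_{1})\times V_{b}(u_{2})$ with CAM the Kronecker sum $C^{(1)}\otimes I_{m_{2}}+I_{m_{1}}\otimes C^{(2)}$ --- is the standard and natural one, and your adjacency count $\delta_{i,a}\,c^{(2)}_{j,b}+\delta_{j,b}\,c^{(1)}_{i,a}$ is right under the paper's column convention for CAMs. Your attention to the size convention $2\le m<n$ is a genuine point the citation glosses over: the only failure is $K_{2}\Box K_{2}=C_{4}$, which you correctly dispatch via distance-regularity (the distance partition has $3<4$ blocks). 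One small caution on your alternative fix: merging \emph{all} blocks with coincident columns can destroy the singleton first block (in $C_{4}$ the columns of $(1,1)$ and $(2,2)$ also coincide), so one should merge only the pair $(1,2),(2,1)$; since your primary argument for the exceptional case is already complete, this looseness is cosmetic rather than a gap.
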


Properties of a partition of a vertex set corresponding to a CAM are important in the class of highly-regular graphs. 
It is straight-forward to see that distance-regular graphs are highly-regular graphs. 
Therefore, highly-regular graphs are a combinatorial generalization of distance-regular graphs. The following expected properties are satisfied. 

\begin{proposition}$($cf.~{\rm [1, PROPOSITION 3]}$)$.\label{1, prop3}
Let $\Gamma$ be a connected highly-regular graph with CAM $C=[c_{ij}]_{1 \leq i,j \leq m}$. 
For $u, v \in V({\Gamma})$, let $V_{1}(u)=\{u\}$, $V_{2}(u)$, $\dots$, $V_{m}(u)$ and $V_{1}(v)=\{v\}$, $V_{2}(v)$,  $\dots$, $V_{m}(v)$ be the corresponding partitions of $V(\Gamma)$. Then the following properties are satisfied. 
\begin{enumerate}[label={\rm (\arabic*)}]
\item For each $i \in \{0, 1, \dots, \diam(\Gamma) \}$, there exists a nonempty subset $S_{i}$ of $I=\{1,2, \dots, m\}$ such that $D_{i}(u)=\bigsqcup_{t \in S_{i}}V_{t}(u)$, $D_{i}(v)=\bigsqcup_{t \in S_{i}}V_{t}(v)$. 
\item For $t \in \{1, 2, \dots, m\}$, $|V_{t}(u)|=|V_{t}(v)|$. 
\item For $i \in \{0, 1, \dots, \diam(\Gamma) \}$, the induced subgraph of $D_{i}(u)$ and the induced subgraph of $D_{i}(v)$ have the same degree sequence. 
\end{enumerate}
\end{proposition}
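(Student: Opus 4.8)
The plan is to recognize that the cell partition $V_1(u),\dots,V_m(u)$ attached to the CAM is an \emph{equitable partition} of $\Gamma$ and to derive all three items from this. The single fact I will use repeatedly is that, because $C$ is one matrix valid for \emph{every} base vertex, a vertex $y\in V_t(w)$ has exactly $c_{st}$ neighbors in $V_s(w)$, and hence exactly $\sum_{s\in T}c_{st}$ neighbors in $\bigsqcup_{s\in T}V_s(w)$ for any index set $T$; this count depends only on $t$ and $T$, never on $w$ or on the chosen $y$. For part (1) I would run a strong induction on $i$ proving that $D_i(u)$ is a union of cells \emph{and} that the participating index set is read off from $C$ alone. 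The base case is $D_0(u)=\{u\}=V_1(u)$, so $S_0=\{1\}$. For the step, suppose $D_0(u),\dots,D_{i-1}(u)$ are unions of cells with index sets $S_0,\dots,S_{i-1}$ determined by $C$. A vertex $y\notin\bigcup_{j<i}D_j(u)$ satisfies $d(u,y)=i$ precisely when it has a neighbor in $D_{i-1}(u)$; by the observation above, for $y\in V_t(u)$ this happens iff $\sum_{s\in S_{i-1}}c_{st}>0$, a condition on $t$ only. Hence $D_i(u)=\bigsqcup_{t\in S_i}V_t(u)$ with $S_i=\{\,t\in I\setminus\bigcup_{j<i}S_j : \sum_{s\in S_{i-1}}c_{st}>0\,\}$, which is nonempty whenever $i\le\diam(\Gamma)$. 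Since this recursion mentions only $C$, the identical $S_i$ also describes $D_i(v)$, giving the claim.

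For part (2) I would count edges. Counting the edges between $V_i(u)$ and $V_j(u)$ from each side gives $c_{ij}\,|V_j(u)|=c_{ji}\,|V_i(u)|$, so the sizes $z_t:=|V_t(u)|$ solve the system $c_{ij}z_j=c_{ji}z_i$ with normalization $z_1=1$. On the quotient graph $Q$ on $I$, where $t\sim t'$ iff $c_{tt'}>0$ (equivalently $c_{t't}>0$, by undirectedness), every edge fixes the ratio $z_{t'}=(c_{t't}/c_{tt'})z_t$; connectedness of $\Gamma$ makes $Q$ connected, so a spanning tree rooted at $1$ pins down every $z_t$ from $z_1=1$, the non-tree constraints being consistent because a genuine solution exists. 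The sizes $|V_t(v)|$ solve the same normalized system, whence $|V_t(u)|=|V_t(v)|$.

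For part (3), fix $i$ and look at the subgraph induced on $D_i(u)=\bigsqcup_{t\in S_i}V_t(u)$: by the opening observation each vertex of $V_t(u)$ has exactly $d_t:=\sum_{s\in S_i}c_{st}$ neighbors inside $D_i(u)$, so the degree sequence is the multiset containing each $d_t$ with multiplicity $|V_t(u)|$, $t\in S_i$. The values $d_t$ come from $C$ and $S_i$, which are shared by $u$ and $v$ by part (1), and the multiplicities agree by part (2); hence the induced subgraphs on $D_i(u)$ and $D_i(v)$ have equal degree sequences.

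The hard part will be part (1), specifically the assertion that the index set $S_i$ is the \emph{same} for all base vertices rather than merely the (standard) fact that an equitable partition refines the distance partition. The inductive bookkeeping must make visible that each stage of the recursion consults only $C$ and the previously built $S_j$, so that no feature of a particular $u$ ever enters. Once part (1) is in hand, part (2) reduces to the spanning-tree propagation of size ratios---whose sole subtlety, consistency around cycles, is handled by the existence of the actual cell sizes---and part (3) is an immediate degree count.
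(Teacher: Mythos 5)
Your proof is correct, but a direct comparison with ``the paper's proof'' is not possible here: the paper states this proposition with the tag $($cf.~{\rm [1, PROPOSITION 3]}$)$ and uses it as a black box imported from Alavi--Chartrand--Lick--Swart \cite{alavi}, offering no internal proof. Judged on its own, your argument is complete and supplies exactly the point that a careless reading would miss. The standard fact that an equitable partition with a singleton cell refines the distance partition only gives, for each fixed $u$, \emph{some} index set $S_i(u)$ with $D_i(u)=\bigsqcup_{t\in S_i(u)}V_t(u)$; the content of part (1) is that $S_i$ can be chosen \emph{uniformly} in the base vertex, and your recursion $S_i=\{\,t\notin S_0\cup\cdots\cup S_{i-1} : \sum_{s\in S_{i-1}}c_{s,t}>0\,\}$, which consults only $C$ and the previously built sets, establishes precisely that. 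Your part (2) via the edge count $c_{ij}\,|V_j(u)|=c_{ji}\,|V_i(u)|$ and propagation of ratios along a spanning tree of the quotient graph is sound, with two tacit but standard conventions you should make explicit: the cells of a partition are nonempty (this is what makes $c_{tt'}>0$ equivalent to $c_{t't}>0$ and makes the quotient graph connected when $\Gamma$ is), and the claim that $S_i\neq\emptyset$ for all $i\leq\diam(\Gamma)$ deserves one more line --- take a diametral vertex $x$, so $D_i(x)\neq\emptyset$ for every $i\leq\diam(\Gamma)$, forcing $S_i\neq\emptyset$; combined with nonemptiness of cells this even shows all vertices of a connected highly-regular graph have the same eccentricity, a pleasant byproduct of your uniform $S_i$ that also justifies the paper's later device of choosing ``a labeling of $C$ with respect to distance'' in Sections 3 and 6. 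Part (3) then follows by your degree count, since the values $\sum_{s\in S_i}c_{s,t}$ depend only on $C$ and the multiplicities $|V_t(u)|=|V_t(v)|$ agree by part (2).
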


In Section $1$, we introduced four classes of regular graphs, that is, strongly-regular graphs, highly-regular graphs, distance-transitive graphs and distance-regular graphs. Now, we introduce an invariant of highly-regular graphs ({\it cf}.~\cite{alavi}). 

Let $\Gamma$ be a highly-regular graph of order $n$. {\it The index of }$\Gamma$ is the least positive integer $m$ such that a CAM has the size $m$ $(2 \leq m < n)$, and we denote by $\I(\Gamma)$. 
By Proposition \ref{1, prop1}, the index of a highly-regular graph is equal to that of the complement of the graph. By Proposition \ref{1, prop3}, the index $\I(\Gamma)$ is greater than $\diam(\Gamma)$ if the graph $\Gamma$ is connected. Moreover, we get a lower bound of $\I(\Gamma)$. 

Let $\Gamma$ be a connected highly-regular graph. 
For any vertex $u \in V({\Gamma})$ and for any $i \in \{1, \dots, \diam(\Gamma) \}$, 
we denote the induced subgraph of $D_{i}(u)$ by $\langle D_{i}(u) \rangle$. Then, we have the following proposition. 
\begin{proposition}$($cf. {\rm [1, PROPOSITION 5]}$).$\label{1, prop5}
If the cardinality of the degree set of $\langle D_{i}(u) \rangle$ $(1 \leq i \leq \diam(\Gamma))$ is $k_{i}$, then
	\begin{align*}
		\I(\Gamma) \geq 1+\sum_{i=1}^{\diam(\Gamma)}k_{i}.
	\end{align*}
\end{proposition}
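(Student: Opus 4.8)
The plan is to fix an arbitrary CAM $C=[c_{i,j}]_{1\le i,j\le m}$ of $\Gamma$ and prove directly that $m\ge 1+\sum_{i=1}^{\diam(\Gamma)}k_{i}$; since $\I(\Gamma)$ is by definition the least such $m$, the stated bound follows immediately. To set up the bookkeeping I would fix a vertex $u$ with corresponding partition $V_{1}(u)=\{u\},V_{2}(u),\dots,V_{m}(u)$. By Proposition \ref{1, prop3}(1), each distance sphere splits as $D_{i}(u)=\bigsqcup_{t\in S_{i}}V_{t}(u)$ for some nonempty $S_{i}\subseteq I=\{1,\dots,m\}$. Because the spheres $D_{0}(u),\dots,D_{\diam(\Gamma)}(u)$ partition $V(\Gamma)$ and every part $V_{t}(u)$ is contained in exactly one sphere, the sets $S_{0},S_{1},\dots,S_{\diam(\Gamma)}$ form a partition of $I$. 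Since $D_{0}(u)=\{u\}=V_{1}(u)$ gives $S_{0}=\{1\}$, this yields the exact count $m=1+\sum_{i=1}^{\diam(\Gamma)}|S_{i}|$.

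The heart of the argument is the inequality $k_{i}\le |S_{i}|$ for each $i\in\{1,\dots,\diam(\Gamma)\}$, which I would obtain by showing that the degree of a vertex in $\langle D_{i}(u)\rangle$ is constant on each part $V_{t}(u)$. Indeed, take $t\in S_{i}$ and any $y\in V_{t}(u)$. By the defining property of the CAM, $y$ is adjacent to exactly $c_{s,t}$ vertices of $V_{s}(u)$ for every $s$, so the number of neighbours of $y$ lying inside $D_{i}(u)=\bigsqcup_{s\in S_{i}}V_{s}(u)$ is $\sum_{s\in S_{i}}c_{s,t}$. This quantity depends only on the part index $t$ and not on the particular vertex $y\in V_{t}(u)$. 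Hence the degree set of $\langle D_{i}(u)\rangle$ is contained in $\bigl\{\sum_{s\in S_{i}}c_{s,t}\mid t\in S_{i}\bigr\}$, a set with at most $|S_{i}|$ elements, so its cardinality satisfies $k_{i}\le |S_{i}|$.

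Combining the two steps gives $m=1+\sum_{i=1}^{\diam(\Gamma)}|S_{i}|\ge 1+\sum_{i=1}^{\diam(\Gamma)}k_{i}$, and minimising over all CAMs delivers $\I(\Gamma)\ge 1+\sum_{i=1}^{\diam(\Gamma)}k_{i}$. Here $k_{i}$ is a genuine invariant of $\Gamma$, independent of the chosen $u$, by Proposition \ref{1, prop3}(3), which guarantees that all the subgraphs $\langle D_{i}(u)\rangle$ share a common degree sequence; in particular the right-hand side is a fixed number bounding the size of \emph{every} CAM. I do not anticipate a serious obstacle: the only conceptual point is the observation that the CAM forces the induced degree within a distance sphere to be constant across each part $V_{t}(u)$, which is precisely the mechanism by which the distance-sphere partition refines into the finer CAM partition. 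The remainder is routine manipulation of the partition $\{S_{i}\}$ of $I$.
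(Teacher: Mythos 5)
Your argument is correct. The paper itself gives no proof of this proposition (it is quoted from [1, PROPOSITION 5]), so there is nothing to compare against, but your route is the natural one: the CAM partition refines the distance partition (Proposition \ref{1, prop3}(1)), so $m=1+\sum_{i}|S_{i}|$, and the defining property of the CAM makes the degree in $\langle D_{i}(u)\rangle$ equal to $\sum_{s\in S_{i}}c_{s,t}$ for every $y\in V_{t}(u)$, giving $k_{i}\le|S_{i}|$; minimising over CAMs yields the bound. The appeal to Proposition \ref{1, prop3}(3) to see that $k_{i}$ does not depend on $u$ is a worthwhile point to make explicit, and the whole argument is complete as written.
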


By the above discussion, we have the following inequalities. 
\begin{align*}
	\I(\Gamma) \geq 1+\sum_{i=1}^{\diam(\Gamma)}k_{i} \geq 1+ \diam(\Gamma). 
\end{align*}
Here, we note that we can easily find highly-regular graphs which do not attain the above equalities. 

\section{A characterization of distance-regular graphs by using index and diameter of highly-regular graphs}

In Section $2$, we introduced index of highly-regular graphs. 
By using the index, we can characterize a strongly-regular graph, that is, a connected highly-regular graph has the index $3$ if and only if it is a strongly-regular graph ({\it cf}.~\cite{alavi}). 
In this section, we characterize a distance-regular graph by using the index. 

\begin{theorem}\label{characterization thm1}
A graph $\Gamma$ is a connected highly-regular graph with the valency $k$ and CAM of the form (up to a labeling) 
	\begin{align*}
	\left(
	\begin{array}{cccccc}
		0	&	1	&	  0     	& \cdots	&	0	&	0 \\
		k	&	a_{1} &	c_{2} 	&	~	& \vdots &	\vdots \\
		0	&	b_{1} &	\ddots 	& \ddots &    0	&   \vdots \\
	   \vdots & 	0	& 	\ddots 	& \ddots &   c_{m-2} &	0 \\
	   \vdots & \vdots & 	~		&  ~		& a_{m-2} & c_{m-1} \\
		0	&	0	&	\cdots 	&	0	& b_{m-2} & a_{m-1} 
	\end{array}
	\right)
	\end{align*}
if and only if it is a distance-regular graph with $\diam(\Gamma)=m-1$. 
\end{theorem}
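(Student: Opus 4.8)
The plan is to prove both implications from a single structural fact: when $\Gamma$ is a connected highly-regular graph whose CAM has the displayed tridiagonal shape, the distinguished partition coincides with the distance partition, i.e. $V_{t}(u)=D_{t-1}(u)$ for every vertex $u$ and every $t\in\{1,\dots,m\}$. Once this identification is available, the matrix entries read off directly as the classical intersection numbers and the definition of a distance-regular graph is satisfied; conversely, for a distance-regular graph the distance partition manifestly produces a CAM of exactly this form. The two directions are then organised around proving, respectively, that the given CAM forces the distance partition and that the distance partition yields the given CAM.

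I would first dispatch the easier implication $(\Leftarrow)$. Given a distance-regular graph $\Gamma$ of diameter $d$, I fix $u$ and set $V_{t}(u)=D_{t-1}(u)$ for $t=1,\dots,d+1$. A vertex $y\in D_{i}(u)$ has, by definition of distance-regularity, exactly $c_{i}$ neighbours in $D_{i-1}(u)$, $a_{i}$ in $D_{i}(u)$, and $b_{i}$ in $D_{i+1}(u)$, and no others, since a neighbour of $y$ lies at distance $i-1$, $i$, or $i+1$ from $u$. These counts depend only on $i$, hence only on the part, and not on $u$; so the matrix $C$ with super-diagonal $c_{j-1}$, diagonal $a_{j-1}$, and sub-diagonal $b_{j-1}$ is a genuine CAM of size $m=d+1$ having precisely the displayed shape, with $\diam(\Gamma)=d=m-1$. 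The boundary entries $a_{0}=0$, $b_{0}=k$, $c_{1}=1$ come from $D_{0}(u)=\{u\}$, and the degenerate case $k=1$, i.e. $\Gamma=K_{2}$, is covered by the convention $m=n=2$.

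The substance is the forward implication $(\Rightarrow)$, which I would prove by induction on $t$, showing $V_{t}(u)=D_{t-1}(u)$. The base cases $V_{1}(u)=\{u\}=D_{0}(u)$ and $V_{2}(u)=D_{1}(u)$ are immediate from column $1$ (all $k$ neighbours of $u$ sit in $V_{2}(u)$) together with the entry $c_{1}=1$ (each vertex of $V_{2}(u)$ is adjacent to $u$). For the inductive step I use a path argument in one direction: since tridiagonality forces the part-index to change by at most one along each edge and $u\in V_{1}(u)$, any $y\in V_{j}(u)$ satisfies $d(u,y)\geq j-1$; dually, a vertex at distance $i$ that is adjacent to $D_{i-1}(u)=V_{i}(u)$ can only land in $V_{i+1}(u)$, giving $D_{i}(u)\subseteq V_{i+1}(u)$. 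The reverse inclusion $V_{i+1}(u)\subseteq D_{i}(u)$ needs each $y\in V_{i+1}(u)$ to possess a neighbour in $V_{i}(u)$, that is, the super-diagonal entry $c_{i}$ to be strictly positive.

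Establishing $c_{i}>0$ for all $i\in\{1,\dots,m-1\}$ is the main obstacle, and here connectivity is essential. I would argue by contradiction: let $s$ be minimal with $c_{s}=0$; by minimality and the induction already in place, $V_{1}(u)\cup\cdots\cup V_{s}(u)=D_{0}(u)\cup\cdots\cup D_{s-1}(u)$, while $c_{s}=0$ means no vertex of $V_{s+1}(u)$ has a neighbour in $V_{s}(u)$. Tridiagonality then confines all neighbours of the nonempty set $V_{s+1}(u)\cup\cdots\cup V_{m}(u)$ to that same set, so it is a union of connected components not containing $u$, contradicting connectivity. With $c_{i}>0$ secured, the induction closes, giving $V_{t}(u)=D_{t-1}(u)$ for all $t$, whence no vertex lies at distance exceeding $m-1$ and $\diam(\Gamma)=m-1$. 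Finally, reading column $i+1$ of the (vertex-independent) CAM shows that every $y\in D_{i}(u)$ has exactly $c_{i}$, $a_{i}$, $b_{i}$ neighbours in $D_{i-1}(u)$, $D_{i}(u)$, $D_{i+1}(u)$ respectively, numbers independent of $u$ and of $y$, which is exactly the definition of a distance-regular graph.
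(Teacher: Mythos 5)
Your proof is correct and, in the forward direction, follows the same strategy as the paper: identify the cells of the CAM partition with the distance classes column by column, starting from $V_{1}(u)=\{u\}$ and $V_{2}(u)=D_{1}(u)$. The paper compresses the induction into the phrase ``by repeating this argument,'' which silently uses the two facts you make explicit: that tridiagonality confines $D_{i}(u)$ to $V_{i+1}(u)$, and --- the genuinely nontrivial point --- that every superdiagonal entry $c_{i}$ is strictly positive, which is what yields the reverse inclusion $V_{i+1}(u)\subseteq D_{i}(u)$. Your connectivity argument (a minimal $s$ with $c_{s}=0$ would, by tridiagonality, cut the nonempty set $V_{s+1}(u)\cup\cdots\cup V_{m}(u)$ off from $u$) supplies exactly the justification the paper omits, so your write-up is the more complete one. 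For the converse the paper argues via the index: it combines the general bound $\I(\Gamma)\geq 1+\diam(\Gamma)$ with the observation that a distance-regular graph admits a CAM of size $\diam(\Gamma)+1$ to conclude $\I(\Gamma)=m$; your direct verification that the distance partition of a distance-regular graph realises the displayed tridiagonal matrix is simpler and is in fact what the statement literally requires. I see no gaps in your argument.
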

\begin{proof}

Let $\Gamma$ be a connected highly-regular graph which satisfies the above condition. 
For any $v \in V({\Gamma})$, there exists a partition of $V({\Gamma})$ with respect to the above CAM. Let $V_{0}(v)=\{v\}, V_{1}(v), \dots, V_{m-1}(v)$ be the partition of $V(\Gamma)$. 
First, we have $D_{0}(v)=\{v\}=V_{0}(v)$. 
By the first column of the CAM, we have $D_{1}(v)=V_{1}(v)$. 
Then, by the second column of the CAM, we have $D_{2}(v)=V_{2}(v)$. 
By repeating this argument, we have $D_{\diam(\Gamma)}=V_{m-1}(v)$. 
Hence, $m=\diam(\Gamma) +1$ and $\Gamma$ is a distance-regular graph. 

Conversely, we suppose a graph $\Gamma$ is a distance-regular graph with $\diam(\Gamma)=m-1$. 
By the definition of a distance-regular graph and the index, we have $1+\diam(\Gamma) \geq \I(\Gamma)$. 
By $\I(\Gamma) \geq 1+\diam(\Gamma)$, we have $\I(\Gamma)=m$. 
Therefore, we have the desired result. 		

\end{proof}

By Theorem \ref{characterization thm1}, we conclude the following characterization of a distance-regular graph by using the index. 

\begin{corollary}\label{characterization cor1}
Let $\Gamma$ be a connected highly-regular graph. Then, the graph $\Gamma$ has the index $\diam(\Gamma)+1$ if and only if it is a distance-regular graph. 
\end{corollary}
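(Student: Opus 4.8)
The plan is to deduce the corollary directly from Theorem~\ref{characterization thm1} together with the bound $\I(\Gamma)\geq\diam(\Gamma)+1$ established in Section~2. So the real content is to translate the statement ``the index equals $\diam(\Gamma)+1$'' into the statement ``$\Gamma$ admits a CAM of the tridiagonal shape of Theorem~\ref{characterization thm1},'' and conversely.

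I would first dispose of the ``if'' direction. Assuming $\Gamma$ is distance-regular, the converse part of Theorem~\ref{characterization thm1} supplies a CAM of the displayed tridiagonal form of size $m=\diam(\Gamma)+1$. This exhibits a CAM of size $\diam(\Gamma)+1$, whence $\I(\Gamma)\leq\diam(\Gamma)+1$; combined with the general lower bound $\I(\Gamma)\geq\diam(\Gamma)+1$ for connected highly-regular graphs (recorded after Proposition~\ref{1, prop5}), this forces $\I(\Gamma)=\diam(\Gamma)+1$.

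For the ``only if'' direction I would start from a minimal CAM $C=[c_{i,j}]$ of size $m=\I(\Gamma)=\diam(\Gamma)+1$, with partition $V_{1}(u),\dots,V_{m}(u)$. Proposition~\ref{1, prop3}(1) gives, for each $i\in\{0,\dots,\diam(\Gamma)\}$, a nonempty $S_{i}\subseteq\{1,\dots,m\}$ with $D_{i}(u)=\bigsqcup_{t\in S_{i}}V_{t}(u)$. These $S_{i}$ are pairwise disjoint (the distance classes are) and cover $\{1,\dots,m\}$ (each $V_{t}(u)$ lies in one distance class), so they partition an $m$-element set into $\diam(\Gamma)+1=m$ nonempty blocks; hence each $S_{i}$ is a singleton and, after relabeling, $V_{i}(u)=D_{i}(u)$.

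Finally I would read off the shape of $C$ in this distance labeling: since adjacency changes the distance to $u$ by at most one, $c_{i,j}=0$ for $|i-j|\geq 2$, and the entries $c_{1,0}=k$, $c_{0,1}=1$, $c_{i,0}=0$ $(i\geq 2)$ reproduce exactly the matrix of Theorem~\ref{characterization thm1}; that theorem then yields distance-regularity. The step I expect to be the crux is the counting argument collapsing each $S_{i}$ to a singleton, as this is precisely where the equality $\I(\Gamma)=\diam(\Gamma)+1$ is consumed; once the CAM-partition is identified with the distance partition, verifying the tridiagonal form and invoking Theorem~\ref{characterization thm1} is routine.
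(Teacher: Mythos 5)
Your proposal is correct and follows essentially the same route as the paper, which deduces the corollary from Theorem~\ref{characterization thm1} together with the lower bound $\I(\Gamma)\geq 1+\diam(\Gamma)$ recorded after Proposition~\ref{1, prop5}. The only difference is that you spell out the counting step (each $S_i$ from Proposition~\ref{1, prop3} collapsing to a singleton when $m=\diam(\Gamma)+1$, so the CAM partition coincides with the distance partition and the CAM is tridiagonal) which the paper leaves implicit in passing from the theorem to the corollary.
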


This is very useful to determine whether a highly-regular graph is a distance-regular graph. 

\section{A construction of highly-regular graphs with diameter $2$ which are not distance-regular graphs}

In Section $3$, we characterize a distance-regular graph by the index of a highly-regular graph. 
In this section, we construct highly-regular graphs with diameter $2$ which are not distance-regular graphs. 

Let $\Gamma$ be a highly-regular graph with $2 \leq \diam(\Gamma) < \infty$. 

First, we consider the case where $\Gamma$ is a distance-regular graph. By Corollary \ref{characterization cor1}, the graph $\Gamma$ has the index $\diam(\Gamma)+1$. 

If $\diam(\Gamma)$ is equal to $2$, the graph $\Gamma$ and the complement $\overline{\Gamma}$ have the index $3$. In this case, if $\overline{\Gamma}$ is connected, both $\Gamma$ and $\overline{\Gamma}$ are strongly-regular graphs. 

If $\diam(\Gamma)$ is greater than $2$, $\diam(\overline{\Gamma})$ is equal to $2$. 
Here, we note that for a connected regular graph with the diameter greater than $2$, the diameter of the complement is $2$. Hence, we have the following inequality. 
\begin{align*}
	\I(\Gamma) =1+\diam(\Gamma) > 1+\diam(\overline{\Gamma}).
\end{align*}
By Proposition \ref{1, prop1}, the index of $\Gamma$ is equal to the index of $\overline{\Gamma}$. Therefore, we have the following inequality. 
\begin{align*}
\I(\overline{\Gamma}) > 1+\diam(\overline{\Gamma}). 
\end{align*}
By Corollary \ref{characterization cor1}, the graph $\overline{\Gamma}$ is a highly-regular graph which is not a distance-regular graph with $\diam(\overline{\Gamma})=2$. 

Next, we consider the case where the graph $\Gamma$ is not a distance-regular graph. 
If $\diam(\Gamma)$ is equal to $2$, the index of $\overline{\Gamma}$ is greater than $3$. 
If $\diam(\Gamma)$ is greater than $2$, the index of $\overline{\Gamma}$ is greater than $3$. Therefore, the graph $\overline{\Gamma}$ is a highly-regular graph which is not a distance-regular graph with $\diam(\overline{\Gamma})=2$. 

By the above discussion, we conclude the following theorem. 
\begin{theorem}\label{construction thm1}
For a highly-regular graph $\Gamma$ with $3 \leq \diam(\Gamma) < \infty$, the complement of the graph is a highly-regular graph with $\diam(\overline{\Gamma})=2$ which is not a distance-regular graph. 
\end{theorem}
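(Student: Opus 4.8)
The plan is to assemble the statement from three facts already in hand: the complement-invariance of the highly-regular property and of the index (Proposition \ref{1, prop1}), the displayed lower bound $\I(\Gamma) \geq 1+\diam(\Gamma)$ from Section $2$, and the index characterization of distance-regularity (Corollary \ref{characterization cor1}). Since $\diam(\Gamma) < \infty$ the graph $\Gamma$ is connected, and since $\diam(\Gamma) \geq 3$ it is neither complete nor edgeless; these are the only hypotheses the argument will use. Crucially, the reasoning is uniform in whether or not $\Gamma$ is itself distance-regular, so no case split is needed.

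First I would record that $\overline{\Gamma}$ is highly-regular, which is immediate from Proposition \ref{1, prop1}. Next I would establish $\diam(\overline{\Gamma})=2$. Here I invoke the classical fact that a connected graph of diameter at least $3$ has a complement that is connected with diameter at most $2$: if $u,v \in V(\Gamma)$ satisfy $d(u,v)\geq 3$ then they share no common neighbour, and from this one checks that any two vertices of $\overline{\Gamma}$ are joined by a path of length at most $2$. Because $\Gamma$ has an edge, $\overline{\Gamma}$ has a non-edge, so $\overline{\Gamma}$ is not complete and $\diam(\overline{\Gamma}) \geq 2$; the two bounds together give $\diam(\overline{\Gamma})=2$. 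In particular $\overline{\Gamma}$ is a \emph{connected} highly-regular graph, which is precisely the hypothesis needed to apply Corollary \ref{characterization cor1} to it.

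The final and most substantive step is to show $\overline{\Gamma}$ is not distance-regular. For this I use the index lower bound $\I(\Gamma) \geq 1+\diam(\Gamma)$, which yields $\I(\Gamma) \geq 4$ since $\diam(\Gamma) \geq 3$. By Proposition \ref{1, prop1} the index is complement-invariant, so $\I(\overline{\Gamma}) = \I(\Gamma) \geq 4 > 3 = \diam(\overline{\Gamma})+1$. Hence $\overline{\Gamma}$ does not have index $\diam(\overline{\Gamma})+1$, and Corollary \ref{characterization cor1} forces $\overline{\Gamma}$ not to be a distance-regular graph, completing the proof.

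I expect the only genuine obstacle to be pinning down $\diam(\overline{\Gamma})=2$ rather than merely $\leq 2$, together with the accompanying connectivity of $\overline{\Gamma}$; once these are in place, the conclusion is a direct combination of Proposition \ref{1, prop1}, the displayed index bound, and Corollary \ref{characterization cor1}. Everything beyond the diameter computation is bookkeeping, and I would be careful to verify connectivity of $\overline{\Gamma}$ \emph{before} invoking Corollary \ref{characterization cor1}, since that corollary is stated only for connected highly-regular graphs.
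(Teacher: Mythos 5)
Your proof is correct and follows essentially the same route as the paper: complement-invariance of the highly-regular property and of the index (Proposition~\ref{1, prop1}), the observation that $\diam(\overline{\Gamma})=2$ when $\diam(\Gamma)\geq 3$, and the index characterization of distance-regularity (Corollary~\ref{characterization cor1}). The only difference is that the paper splits into cases according to whether $\Gamma$ is itself distance-regular (using $\I(\Gamma)=1+\diam(\Gamma)$ in one case and $\I(\Gamma)>1+\diam(\Gamma)$ in the other), whereas you absorb both cases at once via the uniform bound $\I(\Gamma)\geq 1+\diam(\Gamma)\geq 4>3=\diam(\overline{\Gamma})+1$, which is a clean simplification rather than a different argument.
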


By taking the complement of a distance-regular graph with the diameter greater than $2$, we obtain a highly-regular graph with the diameter $2$ which is not a distance-regular graph. 

\section{Another construction of highly regular graphs by using a symmetric association schemes}

In Section $4$, we gave a construction of highly-regular graphs which are not distance-regular graphs. However, this construction can generate only highly-regular graphs with diameter $2$ which are not distance-regular graphs. In this section, we give another construction of highly-regular graphs by using a symmetric association scheme. 

Let $X$ be a finite set and $R_{i}$ $(i=0, \dots, d)$ be nonempty subsets of $X \times X$. 
A {\it symmetric association scheme of class $d$} is a pair $\mathfrak{X}=(X, \{R_{i}\}_{i=0}^{d})$ satisfying the following conditions: 
\begin{enumerate}[label={\rm (\arabic*)}]
\item[(SAS-1)] $R_{0}=\{ (x, x) \mid x \in X\}$. 
\item[(SAS-2)] $X \times X = \bigsqcup_{i=0}^{d}R_{i}$. 
\item[(SAS-3)] $^{t}R_{i}=R_{i}$ for any $i \in \{0, \dots, d\}$, where $^{t}R_{i}=\{ (y, x) \mid (x, y) \in R_{i} \}$. 
\item[(SAS-4)] for any $i, j, l \in \{1, \dots, d\}$, there exists constants $p_{i,j}^{l}$ such that for all $x, y \in X$ with $(x, y) \in R_{l}$, $p_{i, j}^{l}=|\{z \in X \mid (x, z) \in R_{i}$ and $(z, y) \in R_{j}\}|$. 
\end{enumerate}
The above constants $p_{i,j}^l$ are called the {\it intersection numbers}. For each $i \in \{0, \dots, d\}$, we denote the matrix $[p_{i,j}^{l}]_{0 \leq j,l \leq d}$ by $B_{i}$. The matrix $B_{i}$ is called the {\it $i$-th intersection matrix} of $\mathfrak{X}$. For any $x \in X$ and $R_{i}$, we denote a set of  elements $y \in X$ with $(x, y) \in R_{i}$ by $xR_{i}$. 

Let $\mathfrak{X}=(X, \{R_{i}\}_{i=0}^{d})$ be a symmetric association scheme of class $d$.
Then, we have the following theorem. 

\begin{theorem}\label{construction thm2}
For each $l \in \{0, \dots, d\}$, the graph $\Gamma_{R_{l}}=(X, E_{R_{l}})$ is a highly-regular graph with CAM $B_{l}$, where $E_{R_{l}}=\{\{x,y\} \mid (x,y) \in R_{l} \}$.  
\end{theorem}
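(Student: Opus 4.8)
The plan is to manufacture, for each vertex, a canonical partition out of the relations of $\mathfrak{X}$ and then to recognise the between-block adjacency counts as intersection numbers. Fix $l$, and for each $x \in X$ define $V_i(x) := xR_i = \{y \in X \mid (x,y) \in R_i\}$ for $i = 0, 1, \dots, d$. The first task is to verify that $V_0(x), \dots, V_d(x)$ is an admissible partition in the sense of the definition of a highly-regular graph: condition (SAS-1) gives $V_0(x) = \{x\}$, while (SAS-2) guarantees that these sets are pairwise disjoint and exhaust $X$. This produces a partition of $X$ into $m = d+1$ blocks, which is exactly the number of rows and columns of $B_l$.

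Next I would carry out the adjacency count, which is the heart of the argument. Fix a block index $j$ and a vertex $y \in V_j(x)$, so that $(x,y) \in R_j$, and hence $(y,x) \in R_j$ by (SAS-3). A vertex $z$ is a neighbour of $y$ in $\Gamma_{R_l}$ precisely when $\{y,z\} \in E_{R_l}$, that is, when $(y,z) \in R_l$; here (SAS-3) is used once to ensure $E_{R_l}$ is a well-defined undirected edge set and once to move freely between $(y,z) \in R_l$ and $(z,y) \in R_l$. Since $z \in V_i(x)$ is equivalent to $(z,x) \in R_i$, the number of neighbours of $y$ lying in $V_i(x)$ is $|\{z \in X \mid (y,z) \in R_l \text{ and } (z,x) \in R_i\}|$, and because $(y,x) \in R_j$, condition (SAS-4) identifies this cardinality with the intersection number $p_{l,i}^{j}$, which is exactly the $(i,j)$-entry of $B_l$.

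Finally I would observe the uniformity that makes the whole construction work: the value $p_{l,i}^{j}$ depends only on the indices $i$ and $j$ (and the fixed $l$), and in particular neither on the base vertex $x$ nor on the choice of $y$ within $V_j(x)$. Hence the single matrix $B_l$ serves as a collapsed adjacency matrix simultaneously for the partition attached to every vertex, which is precisely the assertion that $\Gamma_{R_l}$ is highly-regular with CAM $B_l$. I expect the only real subtlety to be bookkeeping: one must keep the orientation of the two relations straight so that the neighbour count matches the $(i,j)$-entry of $B_l$ rather than its transpose. This is exactly where (SAS-3) is essential, since the symmetry of the relations is what lets me read the count as an $R_l$-then-$R_i$ path from $y$ to $x$; were one to orient the path the other way the count would come out as $p_{i,l}^{j}$, and one would then need to invoke the commutativity $p_{i,l}^{j} = p_{l,i}^{j}$ of symmetric (hence commutative) association schemes to reconcile the two. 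A last minor check is that $d+1 \leq |X|$, so that the size constraint in the definition is respected, with the understanding that for $l \geq 1$ the edge set $E_{R_l}$ genuinely avoids the diagonal and no loops arise.
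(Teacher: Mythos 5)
Your proposal is correct and follows essentially the same route as the paper: take the partition $V_i(x)=xR_i$ given by (SAS-2) and identify the block-to-block adjacency counts with intersection numbers via (SAS-4), noting their independence of the base vertex. Your extra care with the orientation of the relations (getting $p_{l,i}^{j}$ directly rather than $p_{i,l}^{j}$ plus commutativity) is a welcome refinement of the same argument, not a different one.
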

\begin{proof}
We take arbitrary $x \in X$. By (SAS-2), we get the decomposition of $X$ as follows: 
	\begin{align*}
		X=\bigsqcup_{i=0}^{d}xR_{i}. 
	\end{align*}
We take any two partitions $xR_{i}, xR_{j}$ of the above decomposition. By (SAS-4), for any $y \in xR_{j}$, the number of vertices $z \in xR_{i}$ such that $\{y, z\} \in E_{R_{l}}$ is equal to $p_{i,l}^{j}$. The integer $p_{i,l}^{j}$ is independent of choice of an element $(x, y) \in R_{j}$. In particular, the integer $p_{i,l}^{j}$ is independent of choice of $y \in xR_{j}$. Therefore, the graph $\Gamma_{R_{l}}$ is a highly-regular graph with CAM $B_{l}$. 
\end{proof}

Moreover, let $G$ be a finite group and $G$ acts on $X$ transitively. Let $S=\{\Delta_{0}, \dots, \Delta_{d} \}$ be the set of $G$-orbits of $X \times X$. We suppose that each $G$-orbit of $X \times X$ is symmetric. Then, $\mathfrak{X}=(X, S)$ is a symmetric association scheme of class $d$. This symmetric association scheme is closely related to harmonic analysis of finite homogeneous spaces ({\it cf.}~\cite{bannai}, \cite{cecc}). By Theorem \ref{construction thm2}, we have the following corollary. 

\begin{corollary}\label{construction thm3}
For each $l \in \{0, \dots, d\}$, the graph $X_{\Delta_{l}}$ is a highly-regular graph with CAM $B_{l}$. 
\end{corollary}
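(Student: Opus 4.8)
The plan is to read Corollary~\ref{construction thm3} as a direct specialization of Theorem~\ref{construction thm2}. The paragraph preceding the corollary already records that, when $G$ acts transitively on $X$ and every $G$-orbit on $X \times X$ is symmetric, the pair $\mathfrak{X}=(X, S)$ with $S=\{\Delta_{0}, \dots, \Delta_{d}\}$ is a symmetric association scheme of class $d$. Granting this, the corollary is immediate: setting $R_{i}=\Delta_{i}$ and $E_{\Delta_{l}}=\{\{x,y\} \mid (x,y) \in \Delta_{l}\}$, Theorem~\ref{construction thm2} applied to $\mathfrak{X}$ asserts precisely that $X_{\Delta_{l}}=(X, E_{\Delta_{l}})$ is a highly-regular graph with CAM $B_{l}$ for each $l$, and the CAM produced there is exactly the intersection matrix $B_{l}$ demanded in the statement. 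So essentially all the content is bundled into Theorem~\ref{construction thm2}, and the only thing left to supply is the justification that the orbital configuration really is a symmetric association scheme.

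To make the argument self-contained I would verify the four axioms (SAS-1)--(SAS-4) for $(X, S)$. Axiom (SAS-1) holds because transitivity of $G$ forces the diagonal $\{(x,x) \mid x \in X\}$ to be a single $G$-orbit, which I label $\Delta_{0}$. Axiom (SAS-2) is automatic, since the orbits of any group action partition the set on which it acts, here $X \times X$. Axiom (SAS-3) is exactly the standing hypothesis that each $\Delta_{i}$ is symmetric, i.e. ${}^{t}\Delta_{i}=\Delta_{i}$.

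The substantive step, and the one I expect to be the main obstacle, is (SAS-4): the constancy of the intersection numbers $p_{i,j}^{l}$. Here I would use that $G$ acts on $X \times X$ by the diagonal action, that this action permutes the orbitals, and hence that each $g \in G$ is an automorphism of the whole configuration. Given two pairs $(x,y),(x',y') \in \Delta_{l}$, the definition of $\Delta_{l}$ as an orbit provides $g \in G$ with $(gx, gy)=(x',y')$; the map $z \mapsto gz$ then restricts to a bijection from $\{z \mid (x,z)\in\Delta_{i},\ (z,y)\in\Delta_{j}\}$ onto $\{z' \mid (x',z')\in\Delta_{i},\ (z',y')\in\Delta_{j}\}$, because $g$ preserves every orbital. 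Consequently the cardinality $|\{z \mid (x,z)\in\Delta_{i},\ (z,y)\in\Delta_{j}\}|$ depends only on the orbital $\Delta_{l}$ containing $(x,y)$, which is exactly (SAS-4). Once (SAS-4) is in hand the remaining bookkeeping is purely formal, and the corollary follows by invoking Theorem~\ref{construction thm2}; since that theorem is stated for all $l \in \{0, \dots, d\}$, no separate treatment of the diagonal class $\Delta_{0}$ is required.
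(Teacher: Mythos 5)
Your proposal is correct and follows essentially the same route as the paper: the paper likewise deduces the corollary directly from Theorem~\ref{construction thm2} after noting (without proof, citing standard references) that the symmetric $G$-orbitals on $X \times X$ form a symmetric association scheme. Your explicit verification of (SAS-1)--(SAS-4), in particular the orbit-transporting bijection $z \mapsto gz$ for (SAS-4), correctly fills in the detail the paper delegates to the literature.
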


By Theorem \ref{construction thm2} and Corollary \ref{construction thm3}, we get many examples of highly-regular graphs which are not always distance-regular graphs. 
For example, Euclidean graphs and finite upper half plane graphs are highly-regular graphs ({\it cf}.~\cite{terras}). 
Generalized Euclidean graphs are highly-regular graphs ({\it cf}.~\cite{bannai2009}, \cite{kwok}). Moreover, the other graphs which appear in \cite{kwok} are also. 

In the rest of this section, we construct graphs which are a special case of graphs defined by W.~Li in \cite{winnie}. 
Let $p$ be an odd prime, $r$ be an even number and $\F_{p^r}/\F_{p}$ be a finite field extension of degree $r$. 
Let $N_{r}$ be the kernel of the norm map of the extension $\F_{p^r}/\F_{p}$. $N_{r}$ acts $\F_{p^r}$ by multiplication. 
Then, we consider the semidirect product group $N_{r} \ltimes \F_{p^r}$. 
The semidirect product group $N_{r} \ltimes \F_{p^r}$ acts $\F_{p^r}$ naturally. 
Fix $0 \in \F_{p^r}$. The stabilizer of $0$ is $N_{r} \ltimes \{0\} \simeq N_{r}$. Then, we have the $N_{r} \ltimes \F_{p^r}$-orbit decomposition of $\F_{p^r} \times \F_{p^r}$ as follows: 
	\begin{align}\label{orbit decomp wlg}
		\F_{p^r} \times \F_{p^r}
		=\bigsqcup_{i \in \F_{p} }\Delta_{i}, 
	\end{align}
where for each $i \in \F_{p}^{\times}$, $\Delta_{i}=\{ (x, y) \in \F_{p^r} \times \F_{p^r} \mid N_{\F_{p^r}/\F_{p}}(y-x) =i \}$ and $\Delta_{0}=\{(x, x) \in \F_{p^r} \times \F_{p^r} \}$. 
Since $p$ is an odd and $r$ is an even, each $N_{r} \ltimes \F_{p^r}$-orbit is symmetric. 
Therefore, the pair $(N_{r} \ltimes \F_{p^r}, N_{r} \ltimes \{0\})$ is a (symmetric) Gelfand pair. By Corollary \ref{construction thm3}, for each $l \in \{0, \dots, p-1\}$, the graph $X_{\Delta_{l}}$ is a highly-regular graph. 
We denote $X_{\Delta_{l}}$ by $WL(p, r, l)$ in this paper. 
Here, we note that we can easily compute the $N_{r} \ltimes \F_{p^r}$-irreducible decomposition of the $\ell^2$-space $\ell^{2}(\F_{p^r})$ by using (\ref{orbit decomp wlg}). 
Therefore, we get Kloosterman sums as the spherical functions and several formulas corresponding to formulas of spherical functions such as convolution property and addition theorem. Moreover, we get a formula of Kloosterman sums by using the fact that they are simultaneous eigenfunctions of the intersection matrices. 
\begin{remark}
We can apply the above ways to give several formulas of character sums arising as spherical functions including Gauss periods and Kloosterman sums ({\it cf.}~\cite{bannai2009}, \cite{kwok}). Moreover, we note that both eigenvalues and eigenvectors of $B_{l}$ are expressed by the same spherical functions. This is an interesting property of highly-regular graphs constructed by using Corollary \ref{construction thm3}. 
\end{remark}

\section{Basic properties of the elements of a CAM}

In Section $3$, we showed that a connected highly-regular graph has the index $\diam(\Gamma)+1$ if and only if it is a distance-regular graph. Naturally, hence, we can regard the elements of a CAM as generalized constants of the intersection numbers of distance-regular graphs. 

Let $\Gamma$ be a highly-regular graph which satisfies the condition in Theorem \ref{characterization thm1}. 
The following are well-known basic properties of the intersection numbers of a distance-regular graph: 
\begin{itemize}
\item $k=b_{0} \geq b_{1} \geq \cdots \geq b_{m-2} \geq 1$. 
\item $1=c_{1} \leq c_{2} \leq \cdots \leq c_{m-1} \leq k$. 
\end{itemize}

In this section, we give a generalization of the above basic properties of intersection numbers of a distance-regular graph. 

Let $\Gamma$ be a connected highly-regular graph with CAM $C=[c_{i,j}]_{1 \leq i,j \leq m}$ and the valency $k$. Here, let a labeling of $C$ be a labeling with respect to a distance. By Proposition \ref{1, prop3}, for each $i \in \{0, 1, \dots, \diam(\Gamma)\}$, there exists a nonempty subset $S_{i}$ of $I=\{1, \dots, m\}$ such that for any $u \in V$, $D_{i}(u)=\bigsqcup_{t \in S_{i}}V_{t}(u)$. 
For each $i \in \{1, \dots, \diam(\Gamma) \}$, let the integers $b_{i-1}^{\max}$,  $c_{i}^{\max}$, and $c_{i}^{\min}$ be the following: 
\begin{itemize}
\item $b_{i-1}^{\max}=\max \{ \sum_{t \in S_{i}}c_{t,l} \mid l \in S_{i-1} \}$.
\item $c_{i}^{\max}=\max \{ \sum_{t \in S_{i-1}}c_{t,l} \mid l \in S_{i} \}$. 
\item $c_{i}^{\min}=\min \{ \sum_{t \in S_{i-1}}c_{t,l} \mid l \in S_{i} \}$. 
\end{itemize}

Then, we get the following proposition. 

\begin{proposition}\label{intersection numbers1}
	We have the following inequalities: 
	\begin{enumerate}[label={\rm (\arabic*)}]
	\item $k=b_{0}^{\max} \geq b_{1}^{\max} \geq \cdots \geq b_{\diam(\Gamma)-1}^{\max} \geq 1$. 
	\item $1=c_{1}^{\min} \leq c_{2}^{\min} \leq \cdots \leq c_{\diam(\Gamma)}^{\min} \leq k$. 
	\end{enumerate}
\end{proposition}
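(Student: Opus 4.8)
The plan is to prove both chains of inequalities by exploiting the combinatorial meaning of the sums $\sum_{t \in S_i} c_{t,l}$ as counting edges between distance-layers, together with a double-counting (edge-counting) argument between consecutive layers $D_{i}(u)$ and $D_{i+1}(u)$. First I would fix a vertex $u \in V(\Gamma)$ and observe that, for $l \in S_{i-1}$, the quantity $\sum_{t \in S_i} c_{t,l}$ is precisely the number of neighbors in $D_i(u)$ of a vertex $y \in V_l(u) \subseteq D_{i-1}(u)$; this is an ``outward'' degree analogous to the intersection number $b_{i-1}$. Likewise, for $l \in S_i$, the sum $\sum_{t \in S_{i-1}} c_{t,l}$ is the number of neighbors in $D_{i-1}(u)$ of a vertex in $V_l(u) \subseteq D_i(u)$, the ``inward'' degree analogous to $c_i$. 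The endpoint values $k = b_0^{\max}$ and $1 = c_1^{\min}$ follow directly: $D_0(u) = \{u\}$ forces $S_0 = \{1\}$ with $c_{t,1}$ summing to the valency $k$, while every vertex at distance $1$ has exactly one neighbor (namely $u$) in $D_0(u)$, giving $c_1^{\min} = 1$. The bounds $b_{\diam-1}^{\max} \geq 1$ and $c_{\diam}^{\min} \leq k$ are immediate from connectivity (some outward neighbor exists) and from $c_i$ being bounded by total valency.

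For the monotonicity of the $b^{\max}$ chain in part (1), the key step is to pass from one layer to the next via an edge count. The standard distance-regular argument counts edges between $D_i(u)$ and $D_{i+1}(u)$ two ways: summing the outward degrees from $D_i$ gives $|D_i(u)| \cdot b_i$, and summing the inward degrees from $D_{i+1}$ gives $|D_{i+1}(u)| \cdot c_{i+1}$. In the highly-regular setting these degrees are no longer constant across a layer, so I would instead bound the maximum outward degree at layer $i$ by the maximum outward degree at layer $i-1$. Concretely, I would take a vertex $y \in V_l(u)$ with $l \in S_i$ realizing $b_i^{\max}$, pick one of its neighbors $x$ in $D_{i-1}(u)$ (which exists since $y$ has distance $i$), and compare the number of edges from $y$ going outward to $D_{i+1}(u)$ against the structure around $x$; the triangle inequality for graph distance guarantees that neighbors of $y$ lie only in $D_{i-1}(u) \cup D_i(u) \cup D_{i+1}(u)$, so outward and inward counts partition the valency $k$ and can be played against each other. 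Part (2)'s monotonicity of $c^{\min}$ is the dual statement and would be handled by the symmetric argument, tracking minimum inward degrees instead.

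The main obstacle I anticipate is that, unlike the distance-regular case, the outward/inward degrees are not constant within a layer, so a naive ``sum over the layer'' double count does not immediately yield a clean comparison of the extremal ($\max$ or $\min$) values. The genuine work is to show that the $\max$ of the outward degree is monotone even though individual vertices in a layer can have different degrees; this requires selecting the right witness vertex and the right neighbor in the adjacent layer, and arguing that the inequality survives at the level of extrema. I would expect to use Proposition~\ref{1, prop3}(1), which provides the distance-layer decomposition $D_i(u) = \bigsqcup_{t \in S_i} V_t(u)$ uniformly in $u$, to ensure the sets $S_i$ are well-defined and independent of $u$, and to rely on the fact that the labeling is taken ``with respect to distance'' so that adjacency only ever connects a block $V_t(u)$ to blocks in the same or an adjacent distance-layer. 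The endpoint equalities and the two easy bounds I would dispatch first, then devote the bulk of the argument to the two monotonicity steps, treating (2) as the transpose of (1).
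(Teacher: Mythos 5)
Your reading of the sums is correct: for $l\in S_{i-1}$ the quantity $\sum_{t\in S_i}c_{t,l}$ is the number of neighbours in $D_i(u)$ of any vertex of $V_l(u)$, and the endpoint claims ($b_0^{\max}=k$, $c_1^{\min}=1$, $b_{\diam(\Gamma)-1}^{\max}\geq 1$, $c_{\diam(\Gamma)}^{\min}\leq k$) are all fine and essentially trivial. But the entire content of the proposition is the two monotonicity steps, and there your sketch has a genuine gap. You fix one base vertex $u$, take a witness $y\in D_i(u)$ realizing $b_i^{\max}$, pick a neighbour $x\in D_{i-1}(u)$ of $y$, and propose to ``compare the number of edges from $y$ going outward to $D_{i+1}(u)$ against the structure around $x$.'' This comparison does not close: a neighbour $w$ of $y$ with $d(u,w)=i+1$ satisfies $d(x,w)=2$, so there is no containment of $D_1(y)\cap D_{i+1}(u)$ into $D_1(x)\cap D_i(u)$, and the remark that outward and inward counts ``partition the valency $k$ and can be played against each other'' is not an argument. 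You correctly identify that ``the genuine work is to show that the max of the outward degree is monotone,'' but that work is exactly what is left undone, and the direction you indicate (keeping the base vertex fixed and moving the witness between layers of the same partition) is not the one that succeeds.

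The missing idea is to \emph{change the base vertex}, not the witness. Given $l\in S_i$ and any base vertex $y$, pick $x\in V_l(y)$ and a neighbour $z$ of $y$ with $d(z,x)=i-1$ (the second vertex on a shortest $y$--$x$ path); then $x\in V_s(z)$ for some $s\in S_{i-1}$, using the fact from Proposition~\ref{1, prop3} that the same CAM partition, with the same index sets $S_j$, exists at \emph{every} base vertex. The triangle inequality gives $d(z,w)\leq d(z,x)+1=i$ and $d(z,w)\geq d(y,w)-1=i$ for every $w\in D_1(x)\cap D_{i+1}(y)$, hence the set containment $D_1(x)\cap D_{i+1}(y)\subseteq D_1(x)\cap D_i(z)$, which translates into $\sum_{t\in S_{i+1}}c_{t,l}\leq \sum_{t\in S_i}c_{t,s}\leq b_{i-1}^{\max}$; taking the maximum over $l\in S_i$ gives $b_i^{\max}\leq b_{i-1}^{\max}$. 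Part (2) is the same argument with the roles of $y$ and $z$ interchanged. Without this base-point change (and the base-point independence of the partition that makes the two counts comparable as sums of CAM entries), your outline cannot be completed as written.
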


\begin{proof}
$(1)$
For $i=1$, it is clear that $b_{0}^{\max}$ is equal to $k$. For $i \geq 1$, we take arbitrary $y \in V(\Gamma)$ and $l \in S_{i}$. Then, there exist elements $z \in D_{1}(y)$ and $s \in S_{i-1}$ such that $V_{l}(y)\cap V_{s}(z) \neq \emptyset$. We take arbitrary $x \in V_{l}(y)\cap V_{s}(z)$. First, we show that $D_{1}(x) \cap D_{i+1}(y) \subset D_{1}(x) \cap D_{i}(z)$. 
We take $w \in D_{1}(x)\cap D_{i+1}(y)$. The distance $d(z, w)$ is less than or equal to $d(z, x)+d(x, w)=i$. On the other hand, the distance $d(z, w)$ is greater than or equal to $i$ since  the distance $d(z, w)+d(z, y)$ is greater than or equal to the distance $d(w, y)=i+1$. Hence, the element $w$ is in $D_{1}(x) \cap D_{i}(z)$. Then, we have $D_{1}(x) \cap D_{i+1}(y) \subset D_{1}(x) \cap D_{i}(z)$. By using this, we have 
	\begin{align*}
		\sum_{t \in S_{i+1}}c_{t, l} \leq \sum_{t \in S_{i}}c_{t,s}. 
	\end{align*}
Then, for any $l \in S_{i}$, we have 
	\begin{align*}
		\sum_{t \in S_{i+1}}c_{t, l} \leq b_{i-1}^{\max}. 
	\end{align*}
Therefore, we have $b_{i}^{\max} \leq b_{i-1}^{\max}$.  

$(2)$ For $i=1$, it is clear that $c_{1}^{\min}$ is equal to $1$. For $i \geq 1$, we take arbitrary $z \in V(\Gamma)$ and $s \in S_{i+1}$. Then, there exist $y \in D_{1}(z)$ and $l \in S_{i}$ such that $V_{s}(z) \cap V_{l}(y) \neq \emptyset$. We take arbitrary $x \in V_{l}(y) \cap V_{s}(z)$. First, we show that $D_{1}(x) \cap D_{i-1}(y) \subset D_{1}(x) \cap D_{i}(z)$. 
We take $w \in D_{1}(x) \cap D_{i-1}(y)$. The distance $d(w, z)$ is less than or equal to $i$ since $d(w, z)$ is less than or equal to $d(y, z)+d(y, w)$. On the other hand, the distance $d(w, z)$ is greater than or equal to $i$ since $d(w, z)+d(w, x)$ is greater than or equal to $d(z, x)$. Hence, the element $w$ is in $D_{1}(x) \cap D_{i}(z)$ and we have $D_{1}(x) \cap D_{i-1}(y) \subset D_{1}(x) \cap D_{i}(z)$. By using this, we have 
	\begin{align*}
		\sum_{t \in S_{i-1}}c_{t, l} \leq \sum_{t \in S_{i}}c_{t, s}.
	\end{align*}
Then, for any $k \in S_{i+1}$, we have 
	\begin{align*}
		c_{i}^{\min} \leq \sum_{t \in S_{i}}c_{t, s}.
	\end{align*}
Therefore, we have $c_{i}^{\min} \leq c_{i+1}^{\min}$. 
\end{proof}

\begin{remark}
It is clear that the following property holds: 
	\begin{align*}
		\sum_{j=0}^{m}c_{i,j} =k.
	\end{align*}
\end{remark}

\begin{remark}
We note that the following statement may not always hold in general: 
($\star$) For any $u \in V(\Gamma)$, $i \in \{0, 1, \dots, \diam(\Gamma)-1 \}$, $x \in D_{i}(u)$, the set $D_{1}(x) \cap D_{i+1}(u)$ is nonempty set. 
In fact, there is a counter-example of this statement such as the graph $WL(7, 2, 1)$ which we defined in Section $5$ ({\it cf.}~Figure \ref{WL(7, 2, 1)}). The graph $WL(7, 2, 1)$ has the diameter $3$ and for each vertex $u$, there exists a vertex $x$ which is at distance $2$ from the vertex $u$ such that $D_{1}(x) \cap D_{3}(u)=\emptyset$. 
\begin{figure}[htbp]
	\begin{center}
	\includegraphics[clip,scale=0.50,bb=0 0 360 340]{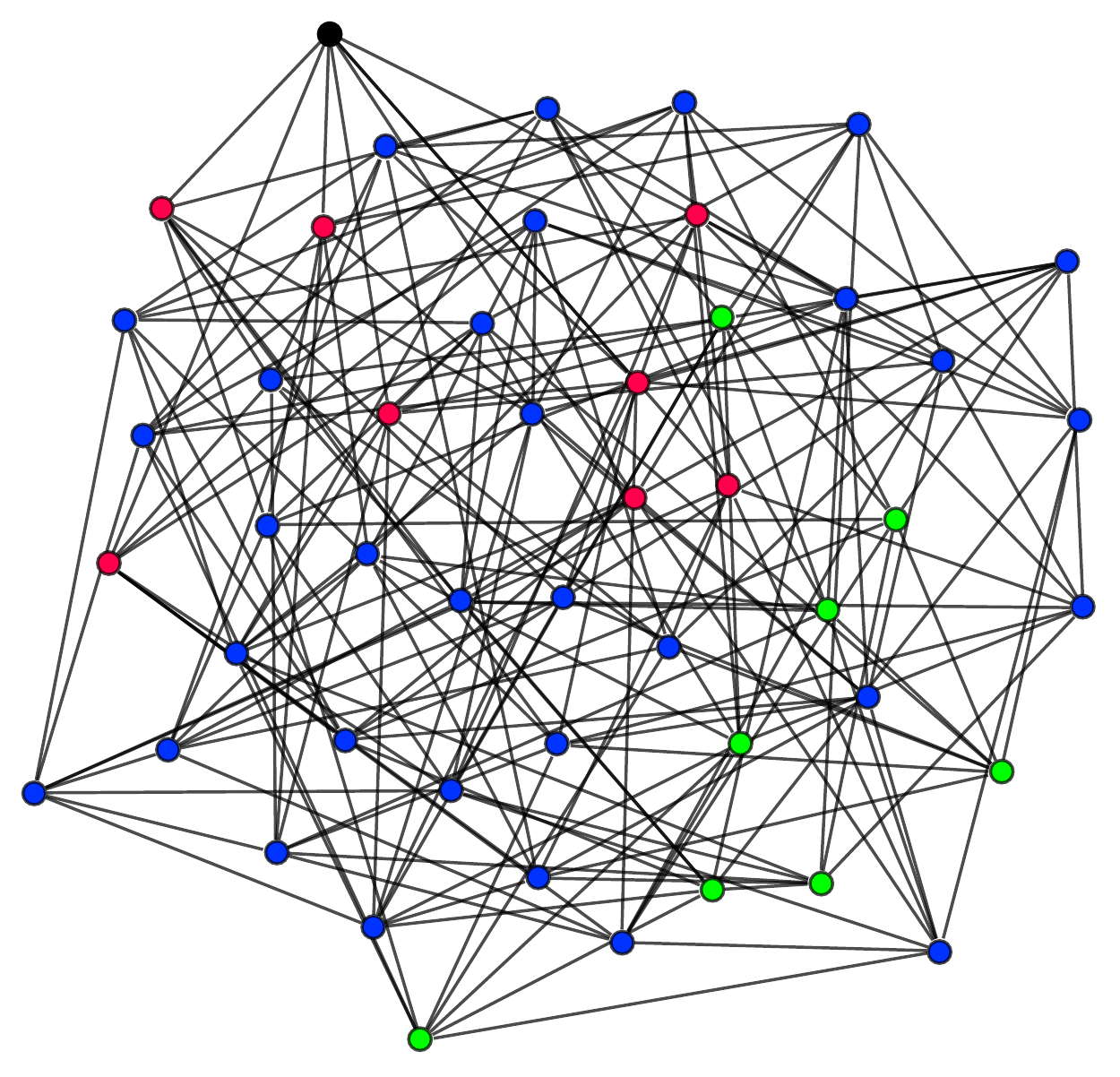}
	\caption{$WL(7, 2, 1)$}
	\label{WL(7, 2, 1)}
	\end{center}
\end{figure}
\end{remark}

The above phenomenon is a difference between highly-regular graphs and distance-regular graphs. 
For a highly-regular graph which satisfies the above statement $(\star)$, the integers $b_{i-1}^{\max}$ and $c_{i}^{\max}$ are satisfied the following property. 

\begin{proposition}\label{intersection numbers2}
We have the following inequality: 
	If $i \geq 1$ and $i+j \leq \diam(\Gamma)$, then $c_{i}^{\max} \leq b_{j}^{\max}$. 
\end{proposition}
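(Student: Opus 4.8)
The plan is to transplant the classical distance-regular-graph argument for the inequality $c_i \leq b_j$ (valid when $i+j \leq \diam(\Gamma)$) into the highly-regular setting, invoking property $(\star)$ at exactly the point where the distance-regular argument silently relies on the existence of long geodesics. Throughout I would read the relevant sums geometrically: by Proposition~\ref{1, prop3}, if $x \in V_l(u)$ with $l \in S_i$ (so $d(u,x)=i$), then the number of neighbours of $x$ in $V_t(u)$ is $c_{t,l}$, whence $\sum_{t \in S_{i-1}} c_{t,l} = |D_1(x) \cap D_{i-1}(u)|$ and $\sum_{t \in S_{i+1}} c_{t,l} = |D_1(x) \cap D_{i+1}(u)|$. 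Thus $c_i^{\max}$ is the largest number of neighbours a distance-$i$ vertex can have at distance $i-1$, and $b_j^{\max}$ the largest number of neighbours a distance-$j$ vertex can have at distance $j+1$.

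First I would fix the data realising the maximum: choose $l^{*} \in S_i$ with $\sum_{t \in S_{i-1}} c_{t,l^{*}} = c_i^{\max}$, fix any $u \in V(\Gamma)$, and pick $x \in V_{l^{*}}(u)$, so that $d(u,x)=i$ and $|D_1(x) \cap D_{i-1}(u)| = c_i^{\max}$. The case $j=0$ is immediate, since $b_0^{\max}=k$ by Proposition~\ref{intersection numbers1} and $c_i^{\max} \leq k$ trivially, so I would assume $j \geq 1$.

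The heart of the argument is to produce a vertex $w$ with $d(u,w)=i+j$ for which $x$ lies on a geodesic from $u$ to $w$. Starting from $x$ at distance $i$ and applying $(\star)$ repeatedly, I would build a sequence $x=w_i, w_{i+1}, \dots, w_{i+j}$ with $w_{s} \sim w_{s+1}$ and $d(u,w_s)=s$, and set $w=w_{i+j}$; property $(\star)$ is applicable at each step because the intermediate distances $i, i+1, \dots, i+j-1$ are all strictly less than $\diam(\Gamma)$, which is guaranteed precisely by $i+j \leq \diam(\Gamma)$. The triangle inequality then forces $d(x,w)=j$, since it is at most $j$ along the path and at least $d(u,w)-d(u,x)=j$; hence $x \in V_{l'}(w)$ for some $l' \in S_j$.

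Finally I would show that the backward neighbours of $x$ (toward $u$) reappear as forward neighbours of $x$ (away from $w$). For $y \in D_1(x) \cap D_{i-1}(u)$ the bounds $d(y,w) \leq d(y,x)+d(x,w)=j+1$ and $d(y,w) \geq d(u,w)-d(u,y)=j+1$ give $d(y,w)=j+1$, so $D_1(x) \cap D_{i-1}(u) \subseteq D_1(x) \cap D_{j+1}(w)$. Counting via Proposition~\ref{1, prop3} then yields $c_i^{\max} = |D_1(x) \cap D_{i-1}(u)| \leq |D_1(x) \cap D_{j+1}(w)| = \sum_{t \in S_{j+1}} c_{t,l'} \leq b_j^{\max}$, as desired. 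I expect the only genuine obstacle to be the geodesic-extension step: without $(\star)$ there need be no vertex at distance $i+j$ admitting $x$ on a geodesic to it, exactly the pathology exhibited by $WL(7,2,1)$, so this is where the hypothesis is indispensable; the remaining distance-squeezing is of the same flavour already used in the proof of Proposition~\ref{intersection numbers1}.
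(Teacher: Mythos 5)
Your proposal is correct and follows essentially the same route as the paper: use $(\star)$ to extend a geodesic from $u$ through $x$ out to a vertex $w$ at distance $i+j$, deduce $d(x,w)=j$, establish the inclusion $D_1(x)\cap D_{i-1}(u)\subseteq D_1(x)\cap D_{j+1}(w)$ by the two triangle-inequality bounds, and count via the CAM entries. The only difference is presentational — you spell out the repeated application of $(\star)$ and the $j=0$ case, where the paper simply asserts the existence of the required shortest path.
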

\begin{proof}
We take arbitrary $y \in V(\Gamma)$, $l \in S_{i}$ and $x \in V_{l}(y)$. 
By the assumption $(\star)$, there exists the shortest path from $x$ to some element $z \in D_{i+j}(y)$. Then, there exists $s \in S_{j}$ such that $x \in V_{s}(z)$. First, we show that $D_{1}(x)\cap D_{i-1}(y) \subset D_{1}(x) \cap D_{j+1}(z)$. 
We take $w \in D_{1}(x) \cap D_{i-1}(y)$. The distance $d(w, z)$ is less than or equal to $1+j$ since $d(w, x)+d(x, z)$ is greater than or equal to $d(w, z)$. 
On the other hand, the distance $d(w, z)$ is greater than or equal to $1+j$ since $d(w, z)+d(w, y)$ is greater than or equal to $d(z, y)$. Hence, the element $w$ is in $D_{1}(x) \cap D_{j+1}(z)$. Then, we have $D_{1}(x) \cap D_{i-1}(y) \subset D_{1}(x) \cap D_{j+1}(z)$. 
By using this, we have 
	\begin{align*}
		\sum_{t \in S_{i-1}}c_{t, l} \leq \sum_{t \in S_{j+1}}c_{t, s}. 
	\end{align*}
Then, for any $l \in S_{i}$, we have 
	\begin{align*}
		\sum_{t \in S_{i-1}}c_{t, l} \leq b_{j}^{\max}. 
	\end{align*}
Therefore, we have $c_{i}^{\max} \leq b_{j}^{\max}$. 
\end{proof}

By Propositions \ref{intersection numbers1}, \ref{intersection numbers2}, we conclude the following theorem. 
\begin{theorem}\label{intersection numbers}
For a connected highly-regular graph, we have the following inequalities: 
	\begin{enumerate}[label={\rm (\arabic*)}]
	\item $k=b_{0}^{\max} \geq b_{1}^{\max} \geq \cdots \geq b_{m-1}^{\max} \geq 1$. 
	\item $1=c_{1}^{\min} \leq c_{2}^{\min} \leq \cdots \leq c_{m}^{\min} \leq k$. 
	\end{enumerate}
Moreover, we suppose that $\Gamma$ satisfies the following property: 
$(\star)$ For any $u \in V(\Gamma)$, $i \in \{0, 1, \dots, \diam(\Gamma)-1 \}$, $x \in D_{i}(u)$, the set $D_{1}(x) \cap D_{i+1}(u)$ is nonempty set. 
Then, the following inequality holds: 
	\begin{enumerate}[label={\rm (\arabic*)}]
	\item[$(3)$] If $i \geq 1$ and $i+j \leq \diam(\Gamma)$, then $c_{i}^{\max} \leq b_{j}^{\max}$. 
	\end{enumerate}
\end{theorem}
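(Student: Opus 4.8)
The plan is to read Theorem~\ref{intersection numbers} as the assembly of the two preceding propositions: inequalities (1) and (2) are exactly Proposition~\ref{intersection numbers1}, and inequality (3), valid under the extra hypothesis $(\star)$, is exactly Proposition~\ref{intersection numbers2}. So at the formal level the only thing to write is that the three displayed chains follow by invoking these results, after recalling (via Proposition~\ref{1, prop3}) that each shell decomposes as $D_i(u)=\bigsqcup_{t\in S_i}V_t(u)$ with $S_i$ independent of $u$, so that every partial sum $\sum_{t\in S}c_{t,l}$ has a vertex-free meaning. Before citing, however, I would make explicit the single combinatorial device that powers all three inequalities, so that the monotonicity is transparent rather than merely formal.

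That device is to reinterpret the CAM partial sums as counts of neighbours landing in a prescribed distance shell. Fixing a base vertex $u$ and a cell-type $l\in S_{i-1}$ and choosing any $x\in V_l(u)$, the quantity $\sum_{t\in S_i}c_{t,l}$ equals $|D_1(x)\cap D_i(u)|$, the number of neighbours of $x$ one shell further from $u$; likewise, for $l\in S_i$, the sum $\sum_{t\in S_{i-1}}c_{t,l}$ equals $|D_1(x)\cap D_{i-1}(u)|$, the number of neighbours one shell nearer. These counts are independent of the representative $x$ and of $u$ because all vertices in a common cell have identical CAM behaviour, so $b^{\max}$, $c^{\max}$, $c^{\min}$ are genuine maxima and minima of well-defined per-type degrees. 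With this dictionary, each inequality reduces to a set-containment between two such neighbour-in-a-shell sets, which I would prove by the triangle inequality for graph distance after inserting a suitable auxiliary vertex on a geodesic.

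Concretely, for (1) I would, given $x$ at distance $i$ from $y$, pick a neighbour $z$ of $y$ with $x$ at distance $i-1$ from $z$, and show $D_1(x)\cap D_{i+1}(y)\subseteq D_1(x)\cap D_i(z)$; the right-hand count is bounded by $b_{i-1}^{\max}$, giving $b_i^{\max}\le b_{i-1}^{\max}$. Part (2) is the mirror statement for backward counts: fixing $x$ at distance $i+1$ from a reference $z$, passing to a neighbour $y$ of $z$ with $x$ at distance $i$, and establishing $D_1(x)\cap D_{i-1}(y)\subseteq D_1(x)\cap D_i(z)$ forces a backward count realising $c_i^{\min}$ to be dominated by one realising $c_{i+1}^{\min}$. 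Part (3) is the genuinely new point: assuming $(\star)$ I would prolong a geodesic outward from $x$ until it reaches $z\in D_{i+j}(y)$ with $x$ at distance $j$ from $z$, hence $x\in V_s(z)$ for some $s\in S_j$, and then prove $D_1(x)\cap D_{i-1}(y)\subseteq D_1(x)\cap D_{j+1}(z)$, which bounds a backward count (at most $c_i^{\max}$) by a forward count (at most $b_j^{\max}$).

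The main obstacle is the hypothesis $(\star)$ in part (3): its sole role is to guarantee that a geodesic starting at $x$ can be extended all the way out to distance $i+j$ from $y$, thereby producing the outer vertex $z$ and an admissible distance-$j$ type $s$. Without it the containment has no anchor, and as the graph $WL(7,2,1)$ shows, inequality (3) can fail outright. I would therefore be careful to derive the existence of $z$ by iterating $(\star)$ from level $i$ up to level $i+j$, and to verify that the resulting $s$ lies in $S_j$, since the final bound $\sum_{t\in S_{j+1}}c_{t,s}\le b_j^{\max}$ is only legitimate when $s$ is a bona fide distance-$j$ cell-type.
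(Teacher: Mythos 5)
Your proposal is correct and follows essentially the same route as the paper: the theorem is obtained by assembling Propositions~\ref{intersection numbers1} and~\ref{intersection numbers2}, whose proofs proceed exactly as you sketch, via the set containments $D_1(x)\cap D_{i+1}(y)\subseteq D_1(x)\cap D_i(z)$, $D_1(x)\cap D_{i-1}(y)\subseteq D_1(x)\cap D_i(z)$, and $D_1(x)\cap D_{i-1}(y)\subseteq D_1(x)\cap D_{j+1}(z)$ established by the triangle inequality after inserting an auxiliary vertex, with $(\star)$ used only to produce the outer vertex $z\in D_{i+j}(y)$ in part (3). Your explicit dictionary between CAM partial sums and neighbour-in-a-shell counts, and your remark about iterating $(\star)$ to reach level $i+j$, only make explicit what the paper leaves implicit.
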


We give an example of Theorem \ref{intersection numbers}. 
Let $C$ be the following matrix: 
	\begin{align*}
		C=\left(
			\begin{array}{cccccc}
				0 & 1 & 0 & 0 & 0 & 0 \\
				4 & 0 & 2 & 1 & 0 & 0 \\
				0 & 2 & 0 & 0 & 1 & 0 \\
				0 & 1 & 0 & 1 & 1 & 0 \\
				0 & 0 & 2 & 2 & 1 & 2 \\
				0 & 0 & 0 & 0 & 1 & 2 
			\end{array}
			\right). 
	\end{align*}
We consider the graph $C_{5} \Box C_{5}$ ({\it cf.}~Figure \ref{ttg55}). Here, $C_{5}$ is the cycle graph of order $5$. Let the labeling of vertices be the same as in Figure \ref{ttg55}. 
It is easy to check that this graph is a highly-regular graph with CAM $C$ which satisfies the condition ($\star$). 
For the vertex $25 \in V(C_{5} \Box C_{5})$, we can take the partition of $V(C_{5} \Box C_{5})$ with respect to $25 \in V(C_{5} \Box C_{5})$ as follows: 
	\begin{align*}
		&V_{0}(25)=\{25\}, \\
		&V_{1}(25)=\{5, 20, 21, 24\}, \\
		&V_{2_{1}}(25)=\{1, 4, 16, 19\}, V_{2_{2}}(25)=\{10, 15, 22, 23\}, \\
		&V_{3}(25)=\{2, 3, 6, 9, 11, 14, 17, 18\}, \\
		&V_{4}(25)=\{7, 8, 12, 13\}. 
	\end{align*}
The entries of the matrix $C$ satisfy the inequalities in Theorem \ref{intersection numbers}. 
\begin{figure}[htbp]
	\begin{center}
	\includegraphics[clip,scale=0.50,bb=0 0 370 360]{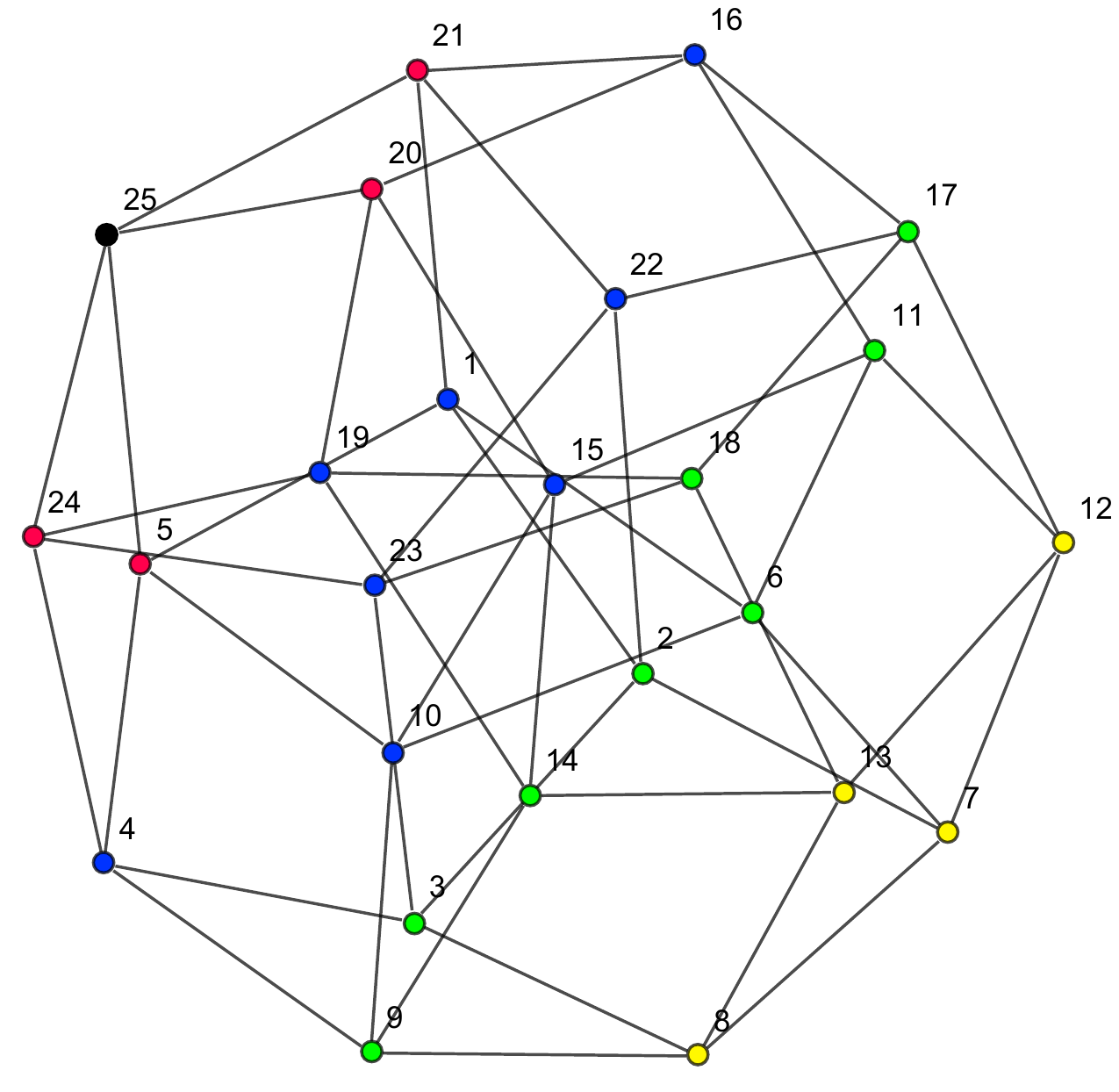}
	\caption{$C_{5} \Box C_{5}$}
	\label{ttg55}
	\end{center}
\end{figure}

\appendix
\section{Infinite families of connected highly-regular graphs with fixed valency which are not distance-regular graphs}
In this section, we give infinite families of connected highly-regular graphs with fixed valency which are not distance-regular graphs explicitly. 
More precisely, we discuss the following question: 
\begin{question}\label{fixed valency question}
Are there only finitely many connected highly-regular graphs of fixed valency greater than 2 which are not distance-regular graphs?
\end{question} 

First, we construct highly-regular graphs of the valency $3$ and $4$ which are not distance-regular graphs explicitly. 

Let $n$, $m$ be positive integers greater than 1. We denote the graph $C_{n} \Box C_{m}$ by $T_{n,m}$, where $C_{n}$ and $C_{m}$ are cycle graphs of order $n$ and $m$ respectively. Here, we note that $C_{2}$ is in Figure \ref{c2}. Also, we note that the graph $T_{n,m}$ is vertex-transitive. 
Without loss of generality, We may assume $n \leq m$. 
\begin{figure}[htbp]
	\begin{center}
	\includegraphics[clip,scale=0.4,bb=0 0 360 30]{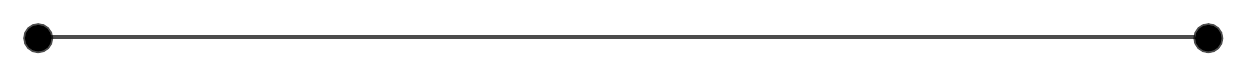}
	\caption{$C_{2}$}
	\label{c2}
	\end{center}
\end{figure}

By Proposition \ref{1, prop6}, the graph $T_{n,m}$ is a connected highly-regular graph since a cycle graph is a distance-regular graph. 
Then, we have the following theorem. 
\begin{proposition}\label{infinite families valency 3,4}
The graph $T_{n,m}$ is a connected highly-regular graph which is not a distance-regular graph except for the cases $(n, m)=(2, 2), (2, 4), (3, 3), (4, 4)$. 
\end{proposition}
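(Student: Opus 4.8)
The connectivity and high‑regularity of $T_{n,m}$ are already supplied by Proposition~\ref{1, prop6} (a cycle is distance‑regular, hence highly‑regular), so the entire content of the statement is to decide distance‑regularity; by the symmetry $T_{n,m}\cong T_{m,n}$ I assume $n\le m$. The plan rests on one structural fact about the Cartesian product: distances are additive, $d_{T_{n,m}}\big((x_1,y_1),(x_2,y_2)\big)=d_{C_n}(x_1,x_2)+d_{C_m}(y_1,y_2)$, and every edge changes exactly one coordinate. Fixing a base vertex $o=(0,0)$ and a vertex $w=(x,y)$, the number of neighbours of $w$ at distance $d(o,w)+1$ (respectively $d(o,w)-1$) from $o$ therefore splits as a sum of the corresponding one‑coordinate counts in $C_n$ and $C_m$. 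This reduces everything to the elementary local data of a single cycle: a distance‑$j$ vertex of $C_\ell$ has one closer and one farther neighbour when $0<j<\diam(C_\ell)$, two closer neighbours at the (even) antipode, and one closer plus one equidistant neighbour at the (odd) extremal vertex.

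First I would dispose of all pairs with $m\ge 5$ by showing that the number of common neighbours of two vertices at distance $2$ is not constant, which already forbids distance‑regularity. Compare the distance‑$2$ vertices $(0,2)$ and $(1,1)$. By additivity any common neighbour of $o$ and $(0,2)$ must change only the second coordinate, hence corresponds to a common neighbour of $0$ and $2$ in $C_m$; for $m\ge 5$ there is exactly one such vertex, namely $1$. On the other hand $o$ and $(1,1)$ have the two common neighbours $(1,0)$ and $(0,1)$. Thus two distance‑$2$ vertices have different common‑neighbour counts, and $T_{n,m}$ is not distance‑regular. This leaves only the finitely many pairs with $2\le n\le m\le 4$.

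Next I would treat the pairs in which exactly one of $n,m$ equals $3$ by showing that $b_1$ is not well defined. Let $\beta_\ell$ denote the number of distance‑$2$ neighbours of a distance‑$1$ vertex of $C_\ell$. Writing $b_1(w)$ for the number of neighbours of $w$ at distance $2$ from $o$, additivity gives $b_1(1,0)=\beta_n+\deg(C_m)$ and $b_1(0,1)=\deg(C_n)+\beta_m$, so $b_1(1,0)-b_1(0,1)=\big(\deg(C_m)-\beta_m\big)-\big(\deg(C_n)-\beta_n\big)$. A direct check shows $\deg(C_\ell)-\beta_\ell=2$ exactly when $\ell=3$ and equals $1$ otherwise (here the degenerate factor $C_2=K_2$, of valency $1$, must be inspected separately from the genuine cycles). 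Hence $b_1(1,0)\ne b_1(0,1)$ precisely when exactly one of $n,m$ is $3$, which eliminates $(2,3)$ and $(3,4)$ among the remaining small pairs.

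After these two steps the only survivors are $(2,2),(2,4),(3,3),(4,4)$, and to make the ``except'' sharp I would verify that each is genuinely distance‑regular. The cleanest route is to recognise them as Hamming graphs through $C_2=K_2=H(1,2)$, $C_3=K_3=H(1,3)$ and $C_4=K_2\Box K_2=H(2,2)$: these four products are $H(2,2)=C_4$, $H(3,2)=Q_3$, $H(2,3)$ (the $3\times 3$ rook graph) and $H(4,2)=Q_4$, all of which are distance‑regular. The main obstacle, and where I would spend the most care, is making the case analysis exhaustive and airtight — pinning down exactly the two local invariants ($c_2$ via common neighbours, and $b_1$) that detect the failure, and correctly handling the degenerate factor $C_2=K_2$, whose valency $1$ breaks the uniform ``valency $2$'' formulas valid for true cycles.
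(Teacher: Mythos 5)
Your proof is correct, and it reaches the same conclusion by a genuinely different route. The paper argues through its own machinery: it shows that the partition attached to a CAM must subdivide some distance class $D_{1}(\pmb{v})$ or $D_{2}(\pmb{v})$ (using the degree sequence of $\langle D_{1}(\pmb{v})\rangle$ together with Proposition~\ref{1, prop5} for $T_{2,3}$, and the splitting of $D_{2}(\pmb{v})$ by the number of common neighbours otherwise), so that $\I(T_{n,m})>1+\diam(T_{n,m})$, and then invokes the index characterization of Corollary~\ref{characterization cor1}; the case analysis is organized by the value of $n\in\{2,3,4,>4\}$. You instead work directly from the definition of distance-regularity, exploiting additivity of distances in the Cartesian product to reduce everything to one-cycle local data: a single uniform $c_{2}$-computation (comparing $(0,2)$ with $(1,1)$) kills all pairs with $m\ge 5$ at once, a single $b_{1}$-computation (the invariant $\deg(C_{\ell})-\beta_{\ell}$, which equals $2$ exactly for $\ell=3$) kills $(2,3)$ and $(3,4)$, and the four exceptions are certified distance-regular by identifying them as Hamming graphs. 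The underlying combinatorial witnesses are essentially the same as the paper's (non-constant common-neighbour counts on $D_{2}$, non-constant counts on $D_{1}$), but your organization is cleaner and more self-contained: it avoids any appeal to the index machinery, handles all large $m$ uniformly rather than by cases on $n$, and makes the verification of the exceptional cases explicit; what the paper's version buys in exchange is an illustration of how its index criterion is used in practice. Your explicit care with the degenerate factor $C_{2}=K_{2}$ (valency $1$, so $\beta_{2}=0$ and $\deg(C_{2})-\beta_{2}=1$) closes the one place where a uniform ``valency $2$'' formula would fail, so I see no gap.
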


\begin{proof}
In the case $n=2$, $T_{2,m}$ has the valency $3$. If $m=2$, $T_{2,2}$ is the cycle graph of order $4$. Hence, $T_{2,2}$ is a distance-regular graph. If $m=3$, we take $\pmb{v} \in V(T_{2,3})$. Then, the cardinality of the degree set of $\langle D_{1}(\pmb{v}) \rangle$ is $2$. By Proposition \ref{1, prop5}, $\I(T_{2,3})$ is greater than $1+\diam(T_{2,3})$. By Corollary \ref{characterization cor1}, $T_{2,3}$ is not a distance-regular graph. If $m=4$, $T_{2,4}$ is the Hamming graph of order $8$ (cube). Hence, $T_{2,4}$ is a distance-regular graph. 
In the case $m>4$, we take arbitrary $\pmb{v} \in V(T_{2,m})$. If $D_{1}(\pmb{v})$ is divided , the graph $T_{2,m}$ is not a distance-regular graph. If $D_{1}(\pmb{v})$ is not divided, at least $D_{2}(\pmb{v})$ is divided into the following as a partition in a highly-regular graph: 
	\begin{align*}
		D_{2}(\pmb{v})=\{ \pmb{w} \in D_{2}(\pmb{v}) \mid |D_{1}(\pmb{w}) \cap D_{1}(\pmb{v})| =2 \} \sqcup \{ \pmb{w} \in D_{2}(\pmb{v}) \mid |D_{1}(\pmb{w}) \cap D_{1}(\pmb{v})|=1 \}. 
	\end{align*}
By Corollary \ref{characterization cor1}, the graph $T_{2,m}$ is not a distance-regular graph. 

In the case $n=3$, $T_{3,m}$ has the valency $4$. If $m=3$, $T_{3,3}$ is the graph as in Figure \ref{ttg33}, and $T_{3,3}$ is a distance-regular graph. 
If $m$ is greater than $3$, we take arbitrary $\pmb{v} \in V(T_{3,m})$. Then, at least $D_{1}(\pmb{v})$ is divided into the following as a partition of a vertex set in a highly-regular graph: 
	\begin{align*}
		D_{1}(\pmb{v})= \{\pmb{w} \in D_{1}(\pmb{v}) \mid |D_{1}(\pmb{w}) \cap D_{1}(\pmb{v})|=1 \} \sqcup 
					\{ \pmb{w} \in D_{1}(\pmb{v}) \mid |D_{1}(\pmb{w}) \cap D_{1}(\pmb{v})|=0 \}. 
	\end{align*}
By Corollary \ref{characterization cor1}, the graph $T_{3,m}$ is not a distance-regular graph. 

In the case $n=4$, $T_{4,m}$ has the valency $4$. If $m=4$, $T_{4,4}$ is the Hamming graph of order $16$. Hence, $T_{4,4}$ is a distance-regular graph. 
If $m$ is greater than $4$, we take arbitrary $\pmb{v} \in V(T_{4,m})$. 
If $D_{1}(\pmb{v})$ is divided, the graph $T_{4, m}$ is not a distance-regular graph. 
If $D_{1}(\pmb{v})$ is not divided, at least $D_{2}(\pmb{v})$ is divided into the following as a partition in a highly-regular graph: 
	\begin{align*}
		D_{2}(\pmb{v})=\{ \pmb{w} \in D_{2}(\pmb{v}) \mid |D_{1}(\pmb{v}) \cap D_{1}(\pmb{w})|=1 \} \sqcup \{ \pmb{w} \in D_{2}(\pmb{v}) \mid |D_{1}(\pmb{v}) \cap D_{1}(\pmb{w})|=2 \}. 
	\end{align*}
By Corollary \ref{characterization cor1}, the graph $T_{4,m}$ is not a distance-regular graph. 

In the case $n>4$, we take arbitrary $\pmb{v} \in V(T_{n,m})$. If $D_{1}(\pmb{v})$ is divided, the graph $T_{n,m}$ is not a distance-regular graph. 
If $D_{1}(\pmb{v})$ is not divided, at least $D_{2}(\pmb{v})$ is divided into the following as a partition in a highly-regular graph: 
	\begin{align*}
		D_{2}(\pmb{v})=\{ \pmb{w} \in D_{2}(\pmb{v}) \mid |D_{1}(\pmb{v}) \cap D_{1}(\pmb{w})|=1 \} \sqcup \{ \pmb{w} \in D_{2}(\pmb{v}) \mid |D_{1}(\pmb{v}) \cap D_{1}(\pmb{w})|=2 \}.
	\end{align*}
By Corollary \ref{characterization cor1}, the graph $T_{n,m}$ is not a distance-regular graph. 
\end{proof}

\begin{remark}
As we mentioned above, $T_{2, m}$ $(m > 2)$ has the valency $3$, and $T_{n,m}$ $(n \geq 3)$ has the valency $4$. 
\end{remark}

\begin{remark}
The graph $T_{n,m}$ has the diameter $[\frac{n}{2}]+[\frac{m}{2}]$. 
Here, the symbol $[x]$ denotes the largest integer less than $x$.  
Therefore, $T_{n,m}$ has the diameter $2$ if and only if $(n,m)=(2,2), (2,3), (3,3)$. 
Then, we can easily observe the following:
\begin{itemize}
\item $\overline{T_{2,2}}$ is not connected. 
\item $\overline{T_{2,3}}$ is the cycle graph of order $6$. 
\item $\overline{T_{3,3}} \simeq T_{3,3}$ is a distance-regular graph ({\it cf.}~Figure \ref{ttg33}). 
\end{itemize}
\begin{figure}[htbp]
	\begin{center}
	\includegraphics[clip,scale=0.4,bb=0 0 360 340]{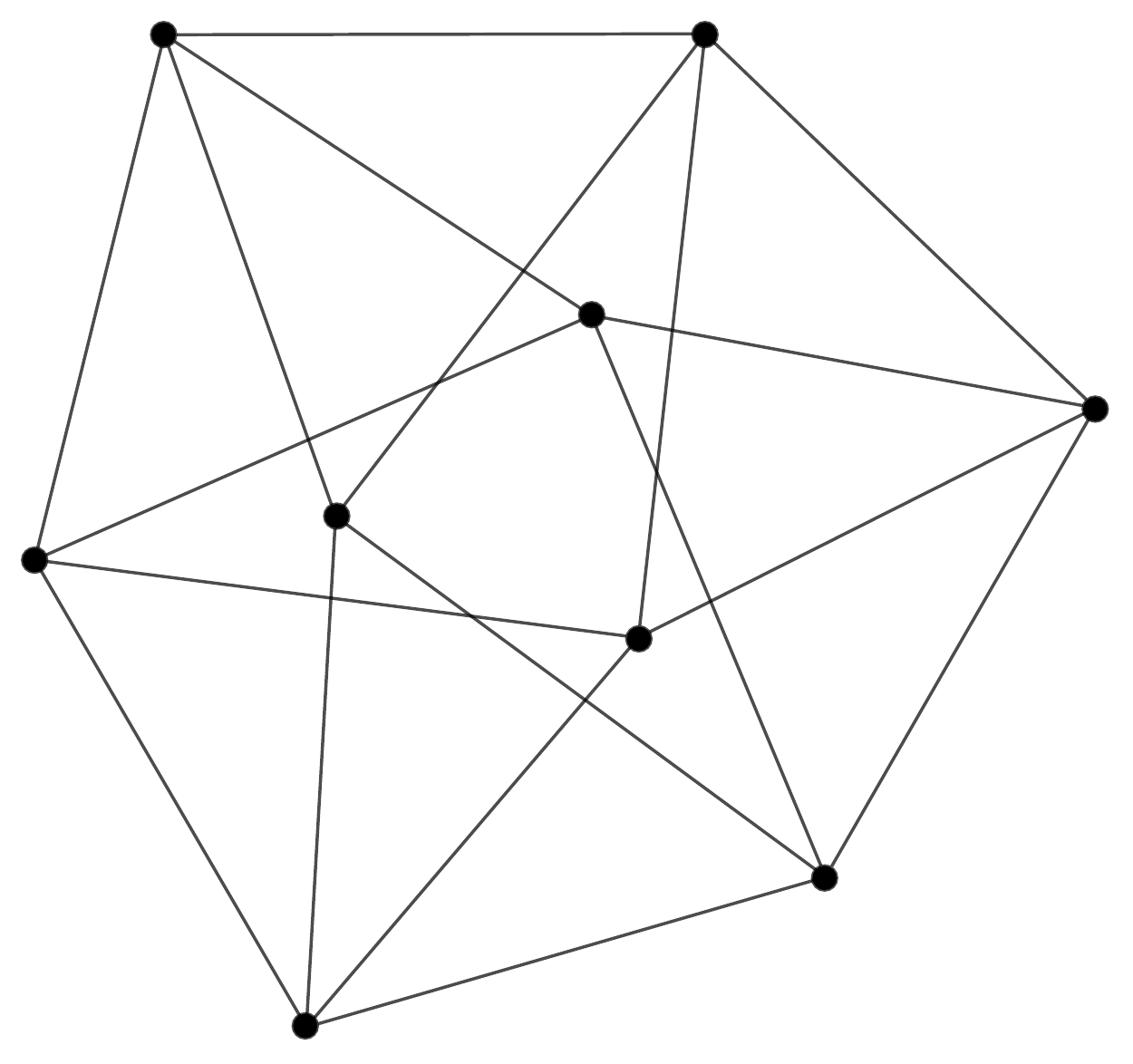}
	\caption{$T_{3, 3} \simeq \overline{T_{3,3}}$}
	\label{ttg33}
	\end{center}
\end{figure}
\end{remark}

Next, we construct some infinite families of connected highly-regular graphs of fixed valency greater than $4$ which are not distance-regular graphs. 

\begin{proposition}\label{Cartesian product of highly-regular graphs}
Let $\Gamma_{1}$ be a connected highly-regular graph which is not a distance-regular graph and $\Gamma_{2}$ be a connected highly-regular graph. Then, the Cartesian product $\Gamma_{1} \Box \Gamma_{2}$ is a connected highly-regular graph which is not a distance-regular graph. 
\end{proposition}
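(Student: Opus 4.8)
The plan is to dispatch the two easy assertions first and then spend the effort on non-distance-regularity. By Proposition~\ref{1, prop6} the Cartesian product $\Gamma_{1} \Box \Gamma_{2}$ is highly-regular, and it is connected because the Cartesian product of two connected graphs is connected; so the only real content is to show that $\Gamma_{1} \Box \Gamma_{2}$ is \emph{not} distance-regular. Rather than estimating the index of the product and invoking Corollary~\ref{characterization cor1} (which would force me to compare the index of a product with the indices of its factors, an awkward detour), I would inherit a failure of distance-regularity directly from $\Gamma_{1}$ by embedding offending vertex pairs into a single $\Gamma_{2}$-fibre.

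Concretely, I would use the standard fact that distances add in a Cartesian product, i.e.\ $d_{\Gamma_{1} \Box \Gamma_{2}}((u_{1},u_{2}),(v_{1},v_{2})) = d_{\Gamma_{1}}(u_{1},v_{1}) + d_{\Gamma_{2}}(u_{2},v_{2})$. Since $\Gamma_{1}$ is not distance-regular, there are an index $i \in \{1,\dots,\diam(\Gamma_{1})\}$ and two pairs $(u_{1},v_{1})$, $(u_{1}',v_{1}')$ with $d_{\Gamma_{1}}(u_{1},v_{1}) = d_{\Gamma_{1}}(u_{1}',v_{1}') = i$ for which one of the three numbers $|D_{1}(v_{1})\cap D_{i-1}(u_{1})|$, $|D_{1}(v_{1})\cap D_{i}(u_{1})|$, $|D_{1}(v_{1})\cap D_{i+1}(u_{1})|$ differs from its counterpart for $(u_{1}',v_{1}')$. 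Fixing a single vertex $w \in V(\Gamma_{2})$, I would embed these as the product pairs $(u_{1},w),(v_{1},w)$ and $(u_{1}',w),(v_{1}',w)$, each still at distance $i$ in $\Gamma_{1} \Box \Gamma_{2}$ by the additivity formula.

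The key computation is to describe the neighbours of $(v_{1},w)$: each is either of the form $(v_{1}',w)$ with $v_{1}' \sim v_{1}$, whose distance to $(u_{1},w)$ equals $d_{\Gamma_{1}}(u_{1},v_{1}')$, or of the form $(v_{1},w')$ with $w' \sim w$, whose distance to $(u_{1},w)$ equals $i+1$. Consequently the number of neighbours of $(v_{1},w)$ at distance $i-1$ (respectively $i$) from $(u_{1},w)$ equals the corresponding count $|D_{1}(v_{1})\cap D_{i-1}(u_{1})|$ (respectively $|D_{1}(v_{1})\cap D_{i}(u_{1})|$) computed inside $\Gamma_{1}$, whereas the number at distance $i+1$ equals $|D_{1}(v_{1})\cap D_{i+1}(u_{1})|$ plus the constant $\deg_{\Gamma_{2}}(w)$. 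Since the same $w$ is used in both embeddings, whichever of the three $\Gamma_{1}$-parameters was unequal stays unequal in $\Gamma_{1} \Box \Gamma_{2}$ (in the last case both values merely shift by the same constant). This exhibits two vertex pairs at distance $i$ in the product with differing distance-$j$ neighbour counts for some $j \in \{i-1,i,i+1\}$, so $\Gamma_{1} \Box \Gamma_{2}$ is not distance-regular.

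The main obstacle, and the step deserving the most care, is the verification that moving along the second coordinate always increases the distance to $(u_{1},w)$ by exactly one, so that such neighbours contribute only to the distance-$(i+1)$ count; this is what keeps the distance-$(i-1)$ and distance-$i$ counts purely one-dimensional and cleanly isolates the discrepancy inherited from $\Gamma_{1}$. The remaining bookkeeping is a routine case split over which of the three parameters fails in $\Gamma_{1}$, together with the observation that $\deg_{\Gamma_{2}}(w)$ is a fixed number once $w$ is chosen.
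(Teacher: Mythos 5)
Your proposal is correct and follows essentially the same route as the paper: both invoke Proposition~\ref{1, prop6} for highly-regularity and then transplant a failing configuration from $\Gamma_{1}$ into a single $\Gamma_{2}$-fibre, using additivity of distances to see that second-coordinate neighbours land only at distance $i+1$ and contribute the same constant to both counts. The only (harmless) difference is that you allow the two offending pairs in $\Gamma_{1}$ to have distinct base vertices, whereas the paper takes them with a common base vertex.
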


\begin{proof}
First, we note that $\diam(\Gamma_{1} \Box \Gamma_{2})=\diam(\Gamma_{1})+\diam(\Gamma_{2})$. 
By Proposition \ref{1, prop6}, the Cartesian product $\Gamma_{1} \Box \Gamma_{2}$ is a highly-regular graph. 
We take arbitrary $\pmb{v}=(v_{1}, v_{2}) \in V(\Gamma_{1}) \times V(\Gamma_{2})$. 
For each $j \in \{0, 1, \dots, \diam(\Gamma_{1})+\diam(\Gamma_{2}) \}$, 
	\begin{align*}
		&D_{j, \Gamma_{1} \Box \Gamma_{2}}(\pmb{v}) \\
			&=\bigsqcup_{k, l} \{ (v, w) \in V(\Gamma_{1}) \times V(\Gamma_{2}) \mid v \in D_{k, \Gamma_{1}} (v_{1})~and~w \in D_{l, \Gamma_{2}}(v_{2}) \}, 
	\end{align*}
where $k$ and $l$ run through $0 \leq k \leq \diam(\Gamma_{1})$, $0 \leq l \leq \diam(\Gamma_{2})$ such that $k+l=j$. 
Since $\Gamma_{1}$ is not a distance-regular graph, there exist $i \in \{ 0, \dots, \diam(\Gamma_{1}) \}$, $j \in \{ i-1, i, i+1 \}$, $u_{1}, u_{2} \in D_{i, \Gamma_{1}}(v_{1})$ such that
	\begin{eqnarray}\label{cp1}
		|D_{1, \Gamma_{1}}(u_{1}) \cap D_{j, \Gamma_{1}}(v_{1})| \neq 
		|D_{1, \Gamma_{1}}(u_{2}) \cap D_{j, \Gamma_{1}}(v_{1})|. 
	\end{eqnarray}
We consider the vertices $(u_{1}, v_{2}), (u_{2}, v_{2}) \in D_{i, \Gamma_{1} \Box \Gamma_{2}}(\pmb{v})$. 
Then, $D_{j, \Gamma_{1}}(v_{1}) \cap D_{1, \Gamma_{1}}(u_{1})$ and $D_{j, \Gamma_{1}}(v_{1}) \cap D_{1, \Gamma_{1}}(u_{2})$ are divided into the following: 
	\begin{align*}
		&D_{1, \Gamma_{1} \Box \Gamma_{2}}((u_{1},v_{2})) \cap D_{j, \Gamma_{1} \Box \Gamma_{2}}(\pmb{v})\\
		&=\{ (u_{1}, w) \in V(\Gamma_{1}) \times V(\Gamma_{2}) \mid \{w, v_{2}\} \in E(\Gamma_{2})~{\rm and}~d(w, v_{2})=j-i \} \\
			& \hspace{0.5cm} \sqcup 
			\{ (v, v_{2}) \in V(\Gamma_{1}) \times V(\Gamma_{2}) \mid v \in D_{1, \Gamma_{1}}(u_{1}) \cap D_{j, \Gamma_{1}}(v_{1}) \}, \\
		&D_{1, \Gamma_{1} \Box \Gamma_{2}}((u_{2},v_{2})) \cap D_{j, \Gamma_{1} \Box \Gamma_{2}}(\pmb{v}) \\
		&=\{ (u_{2}, w) \in V(\Gamma_{1}) \times V(\Gamma_{2}) \mid \{w, v_{2}\} \in E(\Gamma_{2})~{\rm and}~d(w, v_{2})=j-i \} \\
			&\hspace{0.5cm} \sqcup 
			\{ (v, v_{2}) \in V(\Gamma_{1}) \times V(\Gamma_{2}) \mid v \in D_{1, \Gamma_{1}}(u_{2}) \cap D_{j, \Gamma_{1}}(v_{1}) \}. 
	\end{align*}
By (\ref{cp1}), we have
	\begin{align*}
		| D_{1, \Gamma_{1} \Box \Gamma_{2}}((u_{1},v_{2})) \cap D_{j, \Gamma_{1} \Box \Gamma_{2}}(\pmb{v}) | \neq
		| D_{1, \Gamma_{1} \Box \Gamma_{2}}((u_{2},v_{2})) \cap D_{j, \Gamma_{1} \Box \Gamma_{2}}(\pmb{v}) |. 
	\end{align*}
Therefore, $\Gamma_{1} \Box \Gamma_{2}$ is not a distance-regular graph. 
\end{proof}

Let $\mathcal{P}_{1}$ be the infinite family of connected highly-regular graphs of fixed valency $3$
and $\mathcal{P}_{2}$ be the infinite family of connected highly-regular graphs of fixed valency $4$ which we construct explicitly in Proposition \ref{infinite families valency 3,4}. 

Let $k$ be an integer greater than $4$. Then, there exist $r_{1}, r_{2}, r_{3} \in \Z_{\geq 0}$ with $(r_{2}, r_{3}) \neq (0, 0)$ such that $k=r_{1}\cdot1+r_{2}\cdot3+r_{3}\cdot4$. 
Let $\mathcal{P}(r_{2})$ and $\mathcal{P}(r_{3})$ be the following: 
\begin{itemize}
\item	$\mathcal{P}(r_{2})=\{ \Box_{j=1}^{r_{2}}\Gamma_{1,j} \mid \Gamma_{1,j} \in \mathcal{P}_{1}, 1 \leq j \leq r_{2} \}.$
\item	$\mathcal{P}(r_{3})=\{ \Box_{l=1}^{r_{3}}\Gamma_{2,l} \mid \Gamma_{2,l} \in \mathcal{P}_{2}, 1 \leq l \leq r_{3} \}.$ 
\end{itemize}
Moreover, let $\mathcal{P}(r_{1}, r_{2}, r_{3})$ be the following: 
\begin{align*}
\mathcal{P}(r_{1}, r_{2}, r_{3})=\{ (C_{2}^{\Box r_{2}}) \Box \Gamma_{1} \Box \Gamma_{2} \mid \Gamma_{1} \in \mathcal{P}(r_{2}), \Gamma_{2} \in \mathcal{P}(r_{3}) \}. 
\end{align*}
By Proposition \ref{Cartesian product of highly-regular graphs}, we have the following proposition. 
\begin{proposition}\label{infinite families valency greater than 4}
The families $\mathcal{P}(r_{1}, r_{2}, r_{3})$ with $r_{1}\cdot1+r_{2}\cdot3+r_{3}\cdot4=k$, $(r_{2}, r_{3}) \neq (0, 0)$ are infinite families of connected highly-regular graphs of fixed valency $k$ which are not distance-regular graphs. 
\end{proposition}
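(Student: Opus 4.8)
The plan is to check, in turn, the four assertions contained in the statement: that each graph in $\mathcal{P}(r_{1}, r_{2}, r_{3})$ is a connected highly-regular graph, that it has valency $k$, that it is not distance-regular, and that each family contains infinitely many graphs.

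First I would record that every factor appearing in a member of $\mathcal{P}(r_{1}, r_{2}, r_{3})$ is a connected highly-regular graph: the copies of $C_{2}$ are the $1$-regular graph, which we treat as highly-regular, while the graphs in $\mathcal{P}_{1}$ and $\mathcal{P}_{2}$ are connected highly-regular by Proposition \ref{infinite families valency 3,4}. Since a Cartesian product of connected graphs is connected and, by Proposition \ref{1, prop6}, a Cartesian product of two highly-regular graphs is highly-regular, an induction on the number of factors shows that every iterated product $(C_{2}^{\Box r_{1}}) \Box \Gamma_{1} \Box \Gamma_{2}$ is a connected highly-regular graph. For the valency I would use that the valency of a Cartesian product is the sum of the valencies of its factors: the $r_{1}$ copies of $C_{2}$ contribute $r_{1}$, the factor $\Gamma_{1} \in \mathcal{P}(r_{2})$ contributes $3 r_{2}$, and the factor $\Gamma_{2} \in \mathcal{P}(r_{3})$ contributes $4 r_{3}$, for a total of $r_{1} + 3 r_{2} + 4 r_{3} = k$.

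For the failure of distance-regularity I would invoke the hypothesis $(r_{2}, r_{3}) \neq (0, 0)$, which guarantees that at least one factor is drawn from $\mathcal{P}_{1} \cup \mathcal{P}_{2}$ and is therefore a connected highly-regular graph that is \emph{not} distance-regular. Using that $\Box$ is commutative and associative up to isomorphism, I would regroup the member as $\Gamma' \Box \Gamma''$, where $\Gamma'$ is this single non-distance-regular factor and $\Gamma''$ is the product of all remaining factors (a connected highly-regular graph by the previous step). Proposition \ref{Cartesian product of highly-regular graphs} then applies directly and yields that $\Gamma' \Box \Gamma'' \cong (C_{2}^{\Box r_{1}}) \Box \Gamma_{1} \Box \Gamma_{2}$ is not distance-regular.

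It remains to argue that each family is infinite, and this is the step I expect to require the most care, since infiniteness must mean infinitely many pairwise non-isomorphic graphs rather than merely distinct formal expressions. Because $(r_{2}, r_{3}) \neq (0, 0)$, at least one of $\mathcal{P}_{1}, \mathcal{P}_{2}$ occurs as a factor, and these families (the graphs $T_{2, m}$ and $T_{n, m}$ of Proposition \ref{infinite families valency 3,4}) contain graphs of unbounded order. Fixing all other factors and letting that one factor range over its family produces members of $\mathcal{P}(r_{1}, r_{2}, r_{3})$ whose orders grow without bound; since isomorphic graphs have the same order, this yields infinitely many non-isomorphic graphs, completing the plan.
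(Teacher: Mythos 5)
Your proof is correct and takes essentially the same route as the paper, whose entire argument is the single line ``By Proposition \ref{Cartesian product of highly-regular graphs}, we have the following proposition''; you simply supply the routine verifications (connectivity and highly-regularity by induction via Proposition \ref{1, prop6}, the valency count, the regrouping that isolates one non-distance-regular factor, and non-isomorphism via unbounded order) that the paper leaves implicit. The only point worth noting is that you silently read the paper's $C_{2}^{\Box r_{2}}$ as the evidently intended $C_{2}^{\Box r_{1}}$, which is the correct interpretation for the valency to come out to $k$.
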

By Proposition \ref{infinite families valency 3,4}, \ref{infinite families valency greater than 4}, we conclude the following theorem. 
\begin{theorem}
There are infinitely many connected highly-regular graphs of fixed valency greater than $2$  which are not distance-regular graphs.
\end{theorem}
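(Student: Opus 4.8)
The plan is to derive the theorem by assembling the two explicit infinite-family constructions, organising the argument as a case split on the fixed valency $k > 2$.

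First I would handle the base valencies $k = 3$ and $k = 4$ directly from Proposition \ref{infinite families valency 3,4}. For $k = 3$, the graphs $T_{2,m} = C_{2} \Box C_{m}$ with $m > 2$ and $m \neq 4$ form an infinite list of connected highly-regular graphs that are not distance-regular; for $k = 4$, the graphs $T_{n,m}$ with $n \geq 3$, outside the finitely many distance-regular exceptions, do the same. In both cases the members are pairwise non-isomorphic because their orders $nm$ grow without bound, so each list is genuinely infinite.

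Next, for an arbitrary fixed valency $k > 4$, I would invoke Proposition \ref{infinite families valency greater than 4}. The preliminary step is the number-theoretic observation that every integer $k > 4$ can be written as $k = r_{1} \cdot 1 + r_{2} \cdot 3 + r_{3} \cdot 4$ with $r_{1}, r_{2}, r_{3} \in \Z_{\geq 0}$ and $(r_{2}, r_{3}) \neq (0,0)$; since $\gcd(3,4) = 1$ and the term $r_{1} \cdot 1$ is freely adjustable, this is a routine coin-problem verification. For any such decomposition, Proposition \ref{infinite families valency greater than 4} supplies the infinite family $\mathcal{P}(r_{1}, r_{2}, r_{3})$, whose members are connected highly-regular graphs of valency exactly $k$ (valency being additive under the Cartesian product) and are not distance-regular. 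The non-distance-regularity is inherited, through Proposition \ref{Cartesian product of highly-regular graphs}, from the presence of at least one non-distance-regular factor, which the condition $(r_{2}, r_{3}) \neq (0,0)$ guarantees.

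Combining the three cases accounts for every fixed valency greater than $2$, which is the assertion of the theorem. The hard part is not in this final assembly at all: the genuine work has already been absorbed into Proposition \ref{Cartesian product of highly-regular graphs}, whose content is that a Cartesian product containing a non-distance-regular highly-regular factor remains non-distance-regular. Relative to that, the only step here that needs more than a citation is the solvability of $k = r_{1} + 3r_{2} + 4r_{3}$ with $(r_{2}, r_{3}) \neq (0,0)$ for all $k > 4$, and I expect that elementary representability check to be the main --- though very mild --- obstacle.
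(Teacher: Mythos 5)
Your proposal is correct and follows essentially the same route as the paper: the theorem is obtained by combining Proposition \ref{infinite families valency 3,4} for valencies $3$ and $4$ with Proposition \ref{infinite families valency greater than 4} for valencies $k>4$ via the representation $k=r_{1}\cdot 1+r_{2}\cdot 3+r_{3}\cdot 4$ with $(r_{2},r_{3})\neq(0,0)$. Your added remarks on pairwise non-isomorphism via growing order and on the trivial solvability of the representability condition are harmless refinements of what the paper leaves implicit.
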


\section{Local spectral properties of highly-regular graphs}
In this section, we give the following local spectral properties of highly-regular graphs. 
For vertices $u, v \in V$ and an eigenvalue $\lambda_{l}$, {\it $uv$-crossed multiplicity $m_{uv}(\lambda_{l})$} is $uv$-entry of $E_{l}$. 
Let $\Gamma$ be a connected highly-regular graph with CAM $C=[c_{i,j}]_{1\leq i, j \leq m}$. 
Here, let a labeling of $C$ with respect to a distance. 
For each $i \in \{0, 1, \dots, \diam(\Gamma)\}$, there exists a nonempty subset $S_{i}$ of $I=\{1, \dots, m\}$ such that $D_{i}(u)=\bigsqcup_{t \in S_{i}}V_{t}(u)$, $u \in V$. 
Then, we have the following theorem. 
\begin{theorem}
A connected highly-regular graph $\Gamma$ is a spectrally-regular graph. 
Moreover, for each $u \in V$ and for two vertices $v, w \in V_{s}(u_{0})$, $s \in S_{j}$,  we have $m_{uv}(\lambda_{l})=m_{uw}(\lambda_{l})$, for any $\lambda_{l}$. 
\end{theorem}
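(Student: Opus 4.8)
The plan is to exploit two facts: that the partition $V_{1}(u),\dots,V_{m}(u)$ attached to a vertex $u$ is an \emph{equitable partition} whose quotient is read off the CAM, and that each idempotent $E_{l}$ is a polynomial in $A$. First I would record the equitable-partition identity. Let $P_{u}=[\mathbf{1}_{V_{1}(u)}\mid\cdots\mid\mathbf{1}_{V_{m}(u)}]$ be the characteristic matrix of the partition centred at $u$, so that its first column is $\mathbf{1}_{\{u\}}$. The defining property of the CAM — each $y\in V_{j}(u)$ has exactly $c_{i,j}$ neighbours in $V_{i}(u)$ — is precisely $A\mathbf{1}_{V_{i}(u)}=\sum_{j}c_{i,j}\mathbf{1}_{V_{j}(u)}$, that is, $AP_{u}=P_{u}C^{\mathsf T}$. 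Iterating gives $A^{k}P_{u}=P_{u}(C^{\mathsf T})^{k}$ for all $k\ge 0$, and the first column reads
\begin{align*}
A^{k}\mathbf{1}_{\{u\}}=\sum_{j=1}^{m}\bigl((C^{\mathsf T})^{k}\bigr)_{j,1}\,\mathbf{1}_{V_{j}(u)}.
\end{align*}
Taking the $w$-th coordinate and using the symmetry of $A$, this says that the walk count $(A^{k})_{uw}$ depends only on the index $j$ of the cell $V_{j}(u)$ containing $w$; in particular $(A^{k})_{uu}=\bigl((C^{\mathsf T})^{k}\bigr)_{1,1}$.

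Next I would pass from walks to idempotents. Since $A$ is real symmetric with distinct eigenvalues $\lambda_{0},\dots,\lambda_{d}$, each spectral projector is $E_{l}=p_{l}(A)$ for the Lagrange interpolation polynomial $p_{l}$ determined by $p_{l}(\lambda_{l'})=\delta_{l,l'}$; hence every crossed multiplicity is a fixed linear combination of walk counts, $m_{uv}(\lambda_{l})=(E_{l})_{uv}=\sum_{k}(p_{l})_{k}(A^{k})_{uv}$. For the \emph{moreover} statement, if $v,w\in V_{s}(u)$ lie in the same cell then $(A^{k})_{uv}=(A^{k})_{uw}$ for every $k$ by the previous paragraph, so $m_{uv}(\lambda_{l})=m_{uw}(\lambda_{l})$. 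For spectral regularity, the crucial point is that a \emph{single} CAM $C$ serves \emph{every} vertex $u$ (with $V_{1}(u)=\{u\}$ throughout); therefore $(A^{k})_{uu}=\bigl((C^{\mathsf T})^{k}\bigr)_{1,1}$ is independent of $u$, whence $m_{uu}(\lambda_{l})=(E_{l})_{uu}$ is independent of $u$ for each $l$, which is exactly what it means for $\Gamma$ to be spectrally-regular.

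The only genuine subtlety, and the step I would double-check, is the bookkeeping in the equitable-partition identity: one must keep the transpose straight (it is $AP_{u}=P_{u}C^{\mathsf T}$, not $AP_{u}=P_{u}C$) and must observe that it is precisely the uniformity of the CAM across all vertices, combined with the normalisation $V_{1}(u)=\{u\}$, that collapses the diagonal walk counts $(A^{k})_{uu}$ to the single value $\bigl((C^{\mathsf T})^{k}\bigr)_{1,1}$. Once this identity is in hand, both assertions follow with no further computation from the representation $E_{l}=p_{l}(A)$.
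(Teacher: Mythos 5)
Your proof is correct and is essentially the same argument as the paper's: both rest on the intertwining identity between $A$ and the CAM coming from the equitable partition (your $AP_{u}=P_{u}C^{\mathsf T}$ is the transpose of the paper's $P^{u}A=CP^{u}$) together with the Lagrange-interpolation representation $E_{l}=p_{l}(A)$. The only cosmetic difference is that you iterate to get $A^{k}P_{u}=P_{u}(C^{\mathsf T})^{k}$ and sum coefficients, whereas the paper applies the polynomial identity $P^{u}Z_{l}(A)=Z_{l}(C)P^{u}$ directly and reads off the $(1,1)$ and $(1,s)$ entries.
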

\begin{proof}
We fix a vertex $u \in V$. We define the matrix $P^{u} \in M_{mn}(\R)$ whose entries are given by 
\begin{align*}
(P^{u})_{t,w}=\begin{cases}
				1 & \text{if $w \in V_{t}(u)$, } \\
				0 & \text{otherwise. } 
			\end{cases}
\end{align*}
It is easy to check that this matrix intertwines the adjacency matrix $A$ and collapsed adjacency matrix $C$, that is, $P^{u}A=CP^{u}$. 
For each eigenvalue $\lambda_{l}$, there exists unique polynomial such that $E_{l}=Z_{l}(A)$ by using Lagrange interpolation. 
This implies that for each $l=0, \dots, d$, we have $P^{u}Z_{l}(A)=Z_{l}(C)P^{u}$. 
For $u \in V$, $t \in S_{i}$, $s \in S_{j}$, $w \in V_{s}(u)$, we have 
\begin{align}\label{ac}
\sum_{z \in V_{t}(u)}Z_{l}(A)_{z, w}=Z_{l}(C)_{t,s}. 
\end{align}
Putting $s=t=1 \in S_{0}$, $w \in V_{1}(u)$, we have 
\begin{align*}
m_{u}(\lambda_{l})=Z_{l}(A)_{u,u}=Z_{l}(C)_{1,1}. 
\end{align*}
Therefore, $\Gamma$ is a spectrally-regular graph. 
Moreover, putting $t=1$ in $(\ref{ac})$, we have 
\begin{align*}
Z_{l}(A)_{u,w}=Z_{l}(C)_{1,s}. 
\end{align*}
This implies that for $v, w \in V_{s}(u)$, $m_{u,v}(\lambda_{l})=Z_{l}(C)_{1,s}=m_{u,w}(\lambda_{l})$. 
\end{proof}

\section*{Acknowledgment}
The author expresses gratitude to Professor Hiroyuki Ochiai for his many helpful comments.


\begin{thebibliography}{20}
\bibitem{alavi} Y.~Alavi, G.~Chartrand, D.~R.~Lick and H.~C.~Swart, Highly regular graphs, Ann.~New York Acad.~Sci.~$576$ $(1989)$ $20$--$29$. 

\bibitem{angel} J.~Angel, Finite upper half planes over finite fields, Finite Fields Appl.~$2$ $(1996)$ $62$--$86$. 

\bibitem{bang} S.~Bang, A.~Dubickas, J.~H.~Koolen and V.~Moulton, There are only finitely many distance-regular graphs of fixed valency greater than two, Adv.~Math.~$269$ $(2015)$ $1$--$55$. 

\bibitem{bannai} E.~Bannai and T.~Ito, Algebraic Combinatorics I: Association Schemes, Benjamin/Cummings Publishing Co.~Inc., Menro Park, CA, $1984$. 

\bibitem{bannai2009} E.~Bannai, O.~Shimabukuro and H.~Tanaka, Finite Euclidean graphs and Ramanujan graphs, Discrete Math.~$309$ $(2009)$ $6126$--$6134$. 

\bibitem{bollobas} B.~Bollob{\'a}s, Graph theory: An Introductory Course, Springer-Verlag, Berlin, $1979$. 

\bibitem{bollobas1}B.~Bollob{\'a}s, Modern graph theory, Springer-Verlag, Berlin, $2002$. 

\bibitem{brouwer} A.~E.~Brouwer, A.~M.~Cohen and A.~Neumaier, Distance-regular graphs, volume 18 of Ergebnisse der Mathematik und ihrer Grenzgebiete $(3)$ $[$Results in Mathematics and Related Areas $(3)$$]$. Springer-Verlag, Berlin, $1989$. 

\bibitem{cameron} P.~J.~Cameron, There are only finitely many finite distance-transitive graphs of given valency greater than two, Combinatorica $2$ $(1)$ $(1982)$ $9$--$13$. 

\bibitem{cameronpraegersaxlseitz} P.~J.~Cameron, C.~E.~Praeger, J.~Saxl and G.~M.~Seitz, On the Sims conjecture and distance transitive graphs, Bull.~Lond.~Math.~Soc.~$15$ $(5)$ $(1983)$ $499$--$506$. 

\bibitem{cecc} T.~Ceccherini-Silberstein, F.~Scarabotti and F.~Tolli, Representation Theory and Harmonic Analysis of wreath products of finite groups, Cambridge University Press $(2014)$. 

\bibitem{fiol} M.~A.~Fiol, E.~Garriga and J.~L.~A.~Yebra, Locally pseudo-distance-regular graphs, J.~Combin.~Theory Ser.~B $68$ $(1996)$ $179$--$205$. 

\bibitem{kwok} W.~M.~Kwok, Character tables of association schemes of affine type, European J.~Combin.~$13$ $(1992)$ $167$--$185$. 

\bibitem{winnie} W.~Li, Character sums and abelian Ramanujan graphs, J.~Number Theory $41$ $(1992)$ $199$--$217$. 

\bibitem{terras} A.~Terras, Fourier analysis of finite groups and applications, London Mathematical Society Student Texts, $43$, Cambridge University Press, Cambridge, $1999$. 
\end{thebibliography}
\end{document}